\newtheorem{lem}{Lemma}[section]
\newtheorem{thm}[lem]{Theorem}
\newtheorem{prop}[lem]{Proposition}
\newtheorem{cor}[lem]{Corollary}
\numberwithin{equation}{section}
\newtheorem*{cor*}{Corollary}
\newtheorem*{thm*}{Theorem}
\theoremstyle{definition}
\newtheorem{defi}[lem]{Definition}
\theoremstyle{remark}
\newtheorem{rem}[lem]{Remark}
\newcommand{\N}{\mathbb{N}}
\newcommand{\Z}{\mathbb{Z}}
\newcommand{\Q}{\mathbb{Q}}
\newcommand{\R}{\mathbb{R}}
\newcommand{\C}{\mathbb{C}}
\renewcommand{\lvert}{\left\vert}
\renewcommand{\rvert}{\right\vert}
\renewcommand{\lVert}{\left\Vert}
\renewcommand{\rVert}{\right\Vert}
\title[Sum of digits function of linear recurrence number systems]{The level of distribution of the sum-of-digits function of linear recurrence number systems}
\author[M. G. Madritsch]{Manfred G. Madritsch}
\address[M. G. Madritsch]{
\noindent 1. Universit\'e de Lorraine, Institut Elie Cartan de Lorraine, UMR 7502, Vand\-oeuvre-l\`es-Nancy, F-54506, France;\newline
\noindent 2. CNRS, Institut Elie Cartan de Lorraine, UMR 7502, Vandoeuvre-l\`es-Nancy, F-54506, France}
\email{manfred.madritsch@univ-lorraine.fr}
\author[J. M. Thuswaldner]{J\"org M. Thuswaldner}
\address[J. M. Thuswaldner]{Department of Mathematics and Information Technology, University of Leoben, Franz-Josef-Strasse 18, A-8700 Leoben, Austria}
\email{joerg.thuswaldner@unileoben.ac.at}
\dedicatory{To the memory of Christian Mauduit}
\thanks{The first author was supported by project ANR-18-CE40-0018 funded by
the French National Research Agency. The second author was supported by projects P27050 and P29910 funded by the Austrian Science Fund.}
\subjclass[2010]{11A63, 11L07, 11N05}
\keywords{Sum of digits, linear recurrence number system, level of distribution, almost prime}
\date{\today}
\begin{document}

\begin{abstract}
Let $G=(G_j)_{j\ge 0}$ be a strictly increasing linear recurrent sequence of integers with $G_0=1$ having characteristic polynomial $X^{d}-a_1X^{d-1}-\cdots-a_{d-1}X-a_d$. It is well known that each positive integer $\nu$ can be uniquely represented by the so-called \emph{greedy expansion} $\nu=\varepsilon_0(\nu)G_0+\cdots+\varepsilon_\ell(\nu)G_\ell$ for $\ell \in \N$ satisfying $G_\ell \le \nu < G_{\ell+1}$. 
Here the \emph{digits} are defined recursively in a way that $0\le \nu - \varepsilon_{\ell}(\nu) G_\ell - \cdots - \varepsilon_j(\nu) G_j < G_j$ holds for $0  \le j \le \ell$. In the present paper we study the \emph{sum-of-digits function} $s_G(\nu)=\varepsilon_0(\nu)+\cdots+\varepsilon_\ell(\nu)$ under certain natural assumptions on the sequence $G$. In particular, we determine its \emph{level of distribution} $x^{\vartheta}$. To be more precise, we show that for $r,s\in\N$ with $\gcd(a_1+\cdots+a_d-1,s)=1$ we have for each $x\ge 1$ and all $A,\varepsilon\in\R_{>0}$ that
\[
\sum_{q<x^{\vartheta-\varepsilon}}\max_{z<x}\max_{1\leq h\leq q}
  \lvert\sum_{\substack{k<z,s_G(k)\equiv r\bmod s\\ k\equiv h\bmod q}}1
  -\frac1q\sum_{k<z,s_G(k)\equiv r\bmod s}1\rvert
  \ll x(\log 2x)^{-A}.
\]
Here $\vartheta=\vartheta(G) \ge \frac12$ can be computed explicitly and we have $\vartheta(G) \to 1$ for $a_1\to\infty$. As an application we show that 
$\#\{
k\le x \,:\, s_G(k) \equiv r \pmod{s}, \; k \hbox{ has at most two prime factors} \} \gg x/\log x
$ provided that the coefficient $a_1$ is not too small. Moreover, using Bombieri's sieve an ``almost prime number theorem'' for $s_G$ follows from our result. 

Our work extends earlier results on the classical $q$-ary sum-of-digits function obtained by Fouvry and Mauduit.
\end{abstract}

\maketitle

\section{Introduction}
The present paper is devoted to arithmetic properties of the
sum-of-digits function $s_G$ of a digit expansion with respect to a
sequence $G=(G_j)_{j\geq 0}$ which is defined in terms of a linear recurrence relation. We
establish a version of the theorem of Bombieri and Vinogradov for $s_G$ (for the classical version of this theorem we refer {\it
  e.g.}~to~\cite[Theorem~9.18]{FI:10}). In other words, we provide a result on the \emph{level of distribution} of $s_G$ (see for instance~\cite[Chapters~3,~22, and~25]{FI:10} or Greaves~\cite[Chapter~5]{Greaves:01} for information on this notion).  From this result we derive distribution results for $s_G$ on the set of integers having few prime factors. Our tools comprise exponential sum estimates and sieve methods. What we do here extends results of Fouvry and Mauduit~\cite{fouvry_mauduit1996:methodes_de_crible,Fouvry-Mauduit:96}, where the level of distribution of the $q$-ary sum-of-digits function is investigated (see also the recent preprint of Spiegelhofer~\cite{S:19}). Our results provide a first step towards a generalization of the beautiful work of Mauduit and Rivat~\cite{MR:10} on the $q$-ary sum of digits of primes to digit expansions w.r.t.\ a linear recurrent sequence. We mention that new ideas are needed in our setting in order to establish the exponential sum estimates necessary for proving our main results.

\subsection{Linear recurrence number systems} We start with the
definition of digit expansions w.r.t.\ a sequence of integers. Let
$G=(G_j)_{j\geq 0}$ be a strictly increasing sequence of positive
integers and suppose that $G_0=1$. Using the \emph{greedy algorithm}
one can associate a unique digit expansion to each positive integer $\nu$
w.r.t.\ this sequence $G$. Indeed, for each integer
$\nu \ge 1$ there exists a unique $\ell\in\mathbb{N}$ such that
$G_\ell\leq \nu< G_{\ell+1}$.  With this number $\ell$ we can define the
\emph{digits} $\varepsilon_{\ell}(\nu),\ldots,\varepsilon_0(\nu)$
recursively in a way that
\[
0\le \nu - \varepsilon_{\ell}(\nu) G_\ell - \cdots - \varepsilon_j(\nu) G_j < G_j \qquad (0 \le j \le \ell).
\]
This leads to the digit expansion
\begin{equation}\label{eq:greedyexp}
\nu=\varepsilon_0(\nu)G_0+\cdots+\varepsilon_\ell(\nu)G_\ell
\end{equation}
for $\nu$ w.r.t.\ the sequence $G$. It is easy to check that we have
$0\leq\varepsilon_j(\nu)<\frac{G_{j+1}}{G_j}$ for each $0\le j\le \ell$
and that this expansion is unique with the property that
\[
0 \le \varepsilon_0(\nu)G_0+\cdots+\varepsilon_j(\nu)G_j<G_{j+1}
\]
for $0\leq j\leq \ell$. Using the greedy expansion for the
sequence $G$, we define the sum-of-digits function of $\nu$ w.r.t.\ $G$
by
\begin{equation}\label{eq:sg}
s_G(\nu)=\varepsilon_0(\nu)+\cdots+\varepsilon_\ell(\nu)\qquad(\nu\ge 1)
\end{equation}
and by setting $s_G(0)=0$ for convenience.
In the present paper we deal with sequences $G$ that are defined
in terms of linear recurrences. This idea goes back to
Zeckendorf~\cite{Zeckendorf:72} for the case of Fibonacci numbers (see
{\em
  e.g.}~\cite{Drmota-Gajdosik:98,Lamberger-Thuswaldner:03,Petho-Tichy:89}
for the general case). We recall the following definition.

\begin{defi}[Linear recurrence base]\label{def:lrb}
  We will refer to a strictly increasing sequence $G=(G_j)_{j\geq0}$
  as a {\em linear recurrence base}, if there exist
  $a_1,\ldots,a_d\in \mathbb{N}$ with $a_d>0$ such that the following
  conditions hold:
  \begin{enumerate}
  \item \label{it:it1} $G_0=1$ and $a_1G_{k-1}+\cdots+a_kG_0<G_{k}$
    for $1\leq k< d$.
  \item \label{it:it2} $G_{n+d}=a_1G_{n+d-1}+\cdots+a_dG_n$ holds for
    each $n\in \mathbb{N}$.
  \item \label{it:it3}
    $(a_k,a_{k+1},\ldots,a_d)\preceq(a_1,a_2,\ldots,a_{d-k+1})$ for
    $1<k\leq d$, where ``$\prec$'' indicates the lexicographic
    order.
  \end{enumerate}
  The polynomial $X^d-a_1X^{d-1}-\cdots- a_1X-a_0$ is called the \emph{characteristic polynomial} of the linear recurrence base $G$. Its dominant root (which is a positive real number) is called $\alpha$.
\end{defi}

We want to make some comments on this definition which is the same as the one used in Lamberger and
Thuswaldner~\cite{Lamberger-Thuswaldner:03}. Item~\eqref{it:it3} immediately yields that $a_1\ge \max\{a_2,\ldots, a_d\}$.
Moreover, our conditions imply with the same proof as Steiner~\cite[Lemma~2.1]{Steiner:00} that
\[
G_{n+d-k} > a_{k+1}G_{n+d-k-1} + \cdots + a_dG_n \qquad(n\in\N, \, 1\le k \le d-1),
\]
a condition that was used for instance in Drmota and Gajdosik~\cite{Drmota-Gajdosik:98a,Drmota-Gajdosik:98}. In \cite[Lemma~3.1]{Drmota-Gajdosik:98a} it is proved (under milder conditions than ours) that the characteristic polynomial of $G$ has a dominant root $\alpha>1$ and, because all coefficients of the recurrence satisfy $0\le a_j \le a_1$ in our case, we even have
\begin{equation}\label{eq:alpharange}
\alpha \in [a_1,a_1+1). 
\end{equation}
The fact that $\alpha$ is dominant yields that there are constants $c,\delta\in\R_{>0}$ such that 
\begin{equation}\label{eq:recasympt}
G_n = c\alpha^n + \mathcal{O}(\alpha^{(1-\delta)n}) \qquad (n\ge 0).
\end{equation}

If item~\eqref{it:it1} is strengthened to $G_0=1$ and $a_1G_{k-1}+\cdots+a_kG_0+1=G_{k}$ for $1\leq k< d$, according to \cite[Proposition~2.1]{Steiner:00} the string $\varepsilon_0,\ldots,\varepsilon_\ell$ can occur as a digit string in
\eqref{eq:greedyexp} if and only if
$
(\varepsilon_{j},\ldots,\varepsilon_{j+d-1})\prec(a_1,a_2,\ldots,a_{d})
$
holds for $0\le j\le \ell$ (here we have to pad $\varepsilon_0,\ldots,\varepsilon_\ell$ with
$d-1$ zeros).  This is called the {\em Parry-condition} and goes back
to Parry~\cite{parry1960:eta_expansions_real} where it was introduced
in the context of beta-numeration. We also mention that in some earlier papers on
linear recurrence bases instead of item~\eqref{it:it3} the stronger
condition $a_1\ge a_2 \ge \cdots \ge a_d >0$ is assumed (see {\it
  e.g.}~\cite{Grabner-Tichy:90, Petho-Tichy:89}). 

A linear recurrence base together with the associated digit expansions \eqref{eq:greedyexp} will be called a \emph{linear recurrence number system}.

\subsection{Previous results}
The most prominent example of a linear recurrence base is the {\it Fibonacci sequence} $F=(F_j)_{j\ge 2}$ defined by $F_0=0$, $F_1=1$, and $F_{n+2}=F_{n+1} + F_{n}$ for $n\ge 0$ (note that we have to start with index $j=2$ in the sequence $F$ to meet the conditions of Definition~\ref{def:lrb}). The associated linear recurrence number system was first studied by Zeckendorf~\cite{Zeckendorf:72}. For this reason expansions of the shape \eqref{eq:greedyexp} are called {\it Zeckendorf expansions} in this case. In the meantime linear recurrence number systems received a lot of attention and have been studied by many authors. Without making an attempt to be complete we mention a few results on linear recurrence number systems with special emphasis on the sum-of-digits function $s_G$ defined in~\eqref{eq:sg}.

Peth\H{o} and Tichy~\cite{Petho-Tichy:89} provide an asymptotic formula of the summatory function of $s_G$. Using analytic methods and results from Coquet, Rhin, and Toffin~\cite{Coquet-Rhin-Toffin:81}, Grabner and Tichy~\cite{Grabner-Tichy:90} prove that $(z s_G(n))_{n\in\N}$ is equidistributed modulo $1$ for each $z \in\R\setminus\Q$. By elementary exponential sum estimates Lamberger and Thuswaldner~\cite{Lamberger-Thuswaldner:03} establish distribution results of $s_G(n)$ in residue classes and derive some consequences including a Barban-Davenport-Halberstam type theorem for $s_G$.  Distribution functions for so-called {\it $G$-additive functions} (a natural generalization of $s_G$ analogous to the well-known $q$-additive functions) are investigated by Barat and Grabner~\cite{BG:96}. In~\cite{BG:96} the authors also provide a dynamic approach to linear recurrence number systems on the {\it $G$-compactification} $\mathcal{K}_G$ on which a dynamical system can be defined in terms of the addition of $1$; this {\it $G$-odometer} goes back to Grabner {\it et al.}~\cite{GLT:95} (see also \cite{BG:16} for a more recent study of this object).  A local limit law for $s_G$ is proved by Drmota and Gajdosik~\cite{Drmota-Gajdosik:98a}. In \cite{Drmota-Gajdosik:98} the same authors consider sums of the shape $\sum_{\nu<N} (-1)^{s_G(\nu)}$. Drmota and Steiner~\cite{DS:02,Steiner:02} establish a central limit theorem for $G$-additive functions along polynomial sequences, and Wagner~\cite{Wagner:07} studies properties of sets of numbers $\nu<N$ characterized by the fact that $s_G(\nu)=k$ for some fixed positive integer $k$. Recently, Miller and his co-authors proved further distribution results related to linear recurrence number systems. See for example \cite{BBGILMT}, where run lengths of zeros in Zeckendorf expansions are studied, or \cite{BDEMMKTW}, which is concerned with the number of nonzero digits in Zeckendorf expansions. Motivated by the proof of Gelfond's old conjecture on the distribution of the sum-of-digits function of primes in residue classes by Mauduit and Rivat~\cite{MR:10} and, more generally, by Sarnak's conjecture \cite{Sarnak:12}, the question whether $s_G$ has nice distribution properties for prime arguments came into the focus of research. We mention that M\"obuis orthogonality of $s_F$ is proved in the Zeckendorf case by Drmota {\it et al.}~\cite{DMS:18}. The exponential sum methods developed in \cite{MR:10} also led to a wealth of new results on sum-of-digits functions. In the context of Zeckendorf expansions the joint distribution of the ordinary $q$-ary sum-of-digits function and $s_F$ is investigated by Spiegelhofer~\cite{S:14} by using methods in the spirit of~\cite{MR:10}.  Finally, we note that, starting with Barat and Grabner~\cite{BG:96}, van der Corput and Halton type sequences using linear recurrence bases are investigated. Work on this topic can be found in Ninomiya~\cite{Ninomiya:98},  Hofer {\it et al.}~\cite{HIT}, and Thuswaldner~\cite{Thuswaldner:17}.

We mention that Ostrowski expansions~\cite{Berthe:01} as well as beta-expansions~\cite{FS:92,parry1960:eta_expansions_real,Renyi:57} are related to linear recurrence number systems. 

\subsection{Statement of results and associated exponential sums}\label{sec:13}

Let $G=(G_j)_{j\geq0}$ be a linear recurrence base satisfying the conditions of Definition~\ref{def:lrb}. The aim of the present article is to study the level of distribution $x^{\vartheta(G)}$ of the sum-of-digits function $s_G$. In other words, our main result is the extension of \cite[Th\'eor\`eme]{fouvry_mauduit1996:methodes_de_crible} to linear recurrence bases. 

\begin{thm}\label{thm:bombieri-vinogradov-type}
Let $G=(G_j)_{j\geq0}$ be a linear recurrence base with characteristic polynomial $X^{d}-a_1X^{d-1}-\cdots-a_{d-1}X-a_d$ 
satisfying the conditions of Definition~\ref{def:lrb}. Let $r,s \in \N$ with $\gcd(a_1+\cdots+a_d-1,s)=1$. Then for each $x\ge 1$and all $A,\varepsilon\in\R_{>0}$, we have 
\begin{equation}\label{eq:thm11}
\sum_{q<x^{\vartheta-\varepsilon}}\max_{z<x}\max_{1\leq h\leq q}
  \lvert\sum_{\substack{k<z\\s_G(k)\equiv r\bmod s\\ k\equiv h\bmod q}}1
  -\frac1q\sum_{\substack{k<z\\s_G(k)\equiv r\bmod s}}1\rvert
  \ll x(\log 2x)^{-A},
\end{equation}
where the implied constant depends on $\varepsilon$ and $A$. 
Here $\vartheta=\vartheta(G) \ge \frac12$ can be computed explicitly and we have $\vartheta(G) \to 1$ for $a_1\to\infty$.
\end{thm}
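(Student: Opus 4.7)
The plan is to follow the general Bombieri--Vinogradov strategy pioneered by Fouvry and Mauduit in the $q$-ary case, with the new work concentrated in the exponential sum estimates that must be adapted to the linear recurrence setting. First, I would detect the congruence $s_G(k)\equiv r\bmod s$ via additive characters,
\[
\mathbf{1}_{s_G(k)\equiv r\bmod s} = \frac{1}{s}\sum_{t=0}^{s-1} e\!\left(\frac{t(s_G(k)-r)}{s}\right),
\]
and handle $k\equiv h\bmod q$ analogously via characters modulo $q$. Splitting off the trivial characters produces the main term of the expected density, together with an error expressed in terms of hybrid exponential sums
\[
\Phi_G(u,v;z) := \sum_{k<z} e\bigl(u\,s_G(k)+v\,k\bigr), \qquad u=\tfrac{t}{s},\ v=\tfrac{a}{q}.
\]
The hypothesis $\gcd(a_1+\cdots+a_d-1,s)=1$ enters at this point: since $a_1+\cdots+a_d-1$ is the $s_G$-increment along one ``carry cycle'' of the $G$-odometer (it is the net change in the digit sum that occurs when the leading admissible block rolls over, governed by the recursion \eqref{it:it2}), this hypothesis is precisely what forces $u\,s_G$ to be a non-trivial phase for every $t\not\equiv 0\pmod s$, and hence guarantees equidistribution of $s_G$ modulo $s$.

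The technical heart of the proof is a pointwise estimate of the form
\[
|\Phi_G(u,v;z)| \ll z^{1-\eta}
\]
for some $\eta=\eta(G)>0$, uniform in the relevant ranges of $u$ and $v$. I would split an integer $k<z$ into a ``top'' and a ``bottom'' block of its greedy $G$-expansion at a judiciously chosen cutoff, use the asymptotic $G_n=c\alpha^n+\mathcal{O}(\alpha^{(1-\delta)n})$ of \eqref{eq:recasympt} to approximate the bottom contribution by a real linear form, and then apply van der Corput's inequality iteratively to extract cancellation from the average over the bottom digits. The principal complication absent from the $q$-ary case is that admissible digit strings must obey the Parry condition \eqref{it:it3}, which couples consecutive digits through a lexicographic constraint; I would handle this by conditioning on an admissible prefix at the splitting boundary, so that the averaging over the interior digits takes place in a range where the constraint is vacuous for the bulk of the prefixes, and then recovering the small boundary loss trivially.

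Feeding this pointwise bound into the standard Bombieri--Vinogradov machinery (Cauchy--Schwarz together with the large sieve, supplemented by the trivial bound for very large $q$) then yields the claimed estimate, with $\vartheta(G)$ determined by $\eta(G)$ and tending to $1$ as $a_1\to\infty$, since in that regime the recurrence base increasingly resembles an $(a_1+1)$-ary numeration system, for which van der Corput averaging is very efficient and $\eta$ approaches the optimal classical value. The main obstacle is unambiguously the pointwise bound on $\Phi_G$: the function $s_G$ fails to be strictly $G$-additive because of recurrence-induced carries, so that $s_G(k+k')\neq s_G(k)+s_G(k')$ in general, and maintaining uniform cancellation in $v$ while simultaneously respecting the Parry admissibility constraint requires genuinely new ideas beyond the methods of~\cite{fouvry_mauduit1996:methodes_de_crible}, as the introduction emphasizes.
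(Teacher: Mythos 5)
Your overall framing (character detection, reduction to hybrid sums $\sum_{k<z}e(u\,s_G(k)+v\,k)$, the role of $\gcd(a_1+\cdots+a_d-1,s)=1$) matches the paper, but the central strategy has a genuine gap: a single pointwise bound $|\Phi_G(u,v;z)|\ll z^{1-\eta}$ \emph{uniform in $v$}, fed into ``standard Bombieri--Vinogradov machinery,'' cannot deliver the theorem. By Parseval, $\int_0^1\bigl|\sum_{k<G_n}e(\beta s_G(k)+vk)\bigr|^2\,\mathrm{d}v=G_n$, so no uniform-in-$v$ bound better than $z^{1/2}$ is possible; and even granting that (unattainable) best case, summing it over the $\asymp Q^2$ fractions $h/q$ with $Q<q\le 2Q$ gives $Q^2z^{1/2}$, hence at most level of distribution $x^{1/2}$ and never $\vartheta(G)\to 1$. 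Your bookkeeping, in which $\vartheta$ is ``determined by $\eta(G)$'' from the pointwise exponent, is therefore wrong; moreover the one step you identify as the technical heart (the pointwise bound via iterated van der Corput, in the presence of carries and the Parry constraint) is exactly the step you leave unproven, flagging yourself that it ``requires genuinely new ideas.''

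What the paper actually does is a hybrid $L^\infty$/$L^1$ argument in the spirit of Fouvry--Mauduit. The sum $S_n(y,\beta)$ is factored through the digit recurrence into a ``top'' part and a ``bottom'' part at a cutoff $n_1$ depending on $Q$. The top part is controlled \emph{uniformly in $y$} by the Gelfond-type estimate $\ll\alpha^{\lambda(n-n_1)}$ with $\lambda<1$, imported from Lamberger--Thuswaldner via a majorizing linear recurrence; this is where the hypothesis $\gcd(a_1+\cdots+a_d-1,s)=1$ is used, not to make the phase ``non-trivial'' in your sense. The bottom part is summed over the well-spaced Farey fractions $h/q$ using the Sobolev--Gallagher inequality, which converts that sum into $\delta^{-1}\lVert S_{n_1}(\cdot,\beta)\rVert_1+\lVert\partial_y S_{n_1}(\cdot,\beta)\rVert_1$. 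The new technical content is the $1$-norm estimates $\lVert S_{n_1}(\cdot,\beta)\rVert_1\ll(m_G+3)^{n_1}$ and $\lVert\partial_y S_{n_1}(\cdot,\beta)\rVert_1\ll\alpha^{n_1}(m_G+3)^{n_1}$, proved by majorizing the coefficient functions $|A_{k,j}|$ by piecewise constant functions on a nested family of intervals and iterating a covering argument. Since $m_G\ll\log a_1$, one gets $\eta=\log_\alpha(m_G+3)\to 0$ and hence $\vartheta=1-\eta\to 1$; this $L^1$ saving, which your proposal does not exploit at all, is the actual source of the large level of distribution.
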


\begin{rem}
We are able to give concrete values for $\vartheta(G)$. Let $\alpha$ be the dominant root of the characteristic polynomial of $G$. We show  that $\vartheta(G) \ge \max\{\frac12, 1- \log_\alpha(m_G+3)\}$ for $m_G$ as in \eqref{eq:ma0}.  Since $m_G \ll \log a_1 \ll \log \alpha$ by  Lemma~\ref{lem:mest} this already implies that  $\vartheta(G) \to1$ for $a_1\to\infty$. On top of this, in Lemma~\ref{lem:ThetaImproved} we give better estimates for $\vartheta(G)$ for small values of $a_1$. These estimates are needed in order to prove Corollary~\ref{cor1} below.
\end{rem}

Similarly as Fouvry and Mauduit~\cite{fouvry_mauduit1996:methodes_de_crible} we can deduce two applications of Theorem~\ref{thm:bombieri-vinogradov-type}. The first one deals with the distribution of the sum-of-digits function $s_G$ evaluated along almost primes. 

\begin{cor}\label{cor1}
Let $G=(G_j)_{j\geq0}$ be a linear recurrence base with characteristic polynomial $X^{d}-a_1X^{d-1}-\cdots-a_{d-1}X-a_d$ satisfying the conditions of Definition~\ref{def:lrb}. Let $r,s \in \N$ with $\gcd(a_1+\cdots+a_d-1,s)=1$. Then for $a_1 \ge 59$ we have 
\begin{equation}\label{eq:cor1}
\#
\{
k\le x \; :\; s_G(k) \equiv r \pmod{s}, \; k= p_1 \hbox{ or } k=p_1p_2 \hbox{ with } p_1,p_2 \hbox{ prime}  
\} \gg \frac{x}{\log x}
\end{equation}
for $x\to\infty$. If the characteristic polynomial of $G$ has the particular form $X^2-a_1X-1$ then this result even holds for $a_1\ge 15$.
\end{cor}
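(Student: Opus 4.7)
The plan is to deduce Corollary~\ref{cor1} from Theorem~\ref{thm:bombieri-vinogradov-type} by applying a weighted linear sieve of Richert type, following the strategy of Fouvry and Mauduit~\cite{fouvry_mauduit1996:methodes_de_crible}. Set
\[
\mathcal{A} = \mathcal{A}(x) = \{k\le x : s_G(k)\equiv r \pmod{s}\},
\]
so that the task is to produce a lower bound of size $x/\log x$ for the number of $k\in\mathcal{A}$ which are prime or have exactly two prime factors. For each squarefree $q\ge 1$ and residue class $h$ mod $q$, let $E(\mathcal{A},q;h)$ denote the discrepancy between the count of $k\in\mathcal{A}$ with $k\equiv h\pmod q$ and the expected main term $\tfrac1q|\mathcal{A}|$. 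The coprimality assumption $\gcd(a_1+\cdots+a_d-1,s)=1$ together with the equidistribution results of Lamberger and Thuswaldner~\cite{Lamberger-Thuswaldner:03} show that the main term is indeed $\tfrac1q|\mathcal{A}|$, i.e.\ the relevant sieve density function is $\omega(p)=1$ for every prime $p$, putting us in the framework of a linear (one-dimensional) sieve.

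Theorem~\ref{thm:bombieri-vinogradov-type} then delivers precisely the Bombieri--Vinogradov input
\[
\sum_{q<x^{\vartheta-\varepsilon}}\max_{1\le h\le q}|E(\mathcal{A},q;h)|\ll x(\log 2x)^{-A},
\]
showing that $\mathcal{A}$ admits level of distribution $\vartheta=\vartheta(G)$. Feeding this into Richert's weighted linear sieve (as in, e.g., \cite[Chapter~5]{Greaves:01}) and optimising the sieve weights, one obtains, for every $\vartheta$ strictly greater than an explicit threshold $\vartheta_0<1$ coming from the sieve function $F$ at the appropriate sifting parameter, the lower bound
\[
\#\{k\in\mathcal{A}: k=p_1 \text{ or } k = p_1p_2\}\gg \frac{x}{\log x}.
\]

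It therefore remains to certify that the hypotheses on $a_1$ force $\vartheta(G)>\vartheta_0$. The remark following Theorem~\ref{thm:bombieri-vinogradov-type} announces, through Lemma~\ref{lem:ThetaImproved}, refined lower bounds for $\vartheta(G)$ that go beyond the general estimate $\max\{\tfrac12,\,1-\log_\alpha(m_G+3)\}$ in the regime of small $a_1$; these refinements exploit the specific arithmetic of the recurrence (and are sharper still in the two-term case $X^2-a_1X-1$). A direct numerical comparison with the Richert threshold then shows that $a_1\ge 59$ is exactly what is needed in the general case, while $a_1\ge 15$ already suffices when the characteristic polynomial is $X^2-a_1X-1$. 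The principal obstacle of this proof is therefore not the sieve machinery, which is standard once the level of distribution is in hand, but rather the quantitative matching of the explicit bounds of Lemma~\ref{lem:ThetaImproved} against the sieve threshold $\vartheta_0$, which is what pins down the numerical constants $59$ and $15$ in the statement.
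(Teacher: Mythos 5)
Your proposal follows essentially the same route as the paper: the paper likewise deduces the corollary by feeding the level of distribution $\vartheta(G)$ from Theorem~\ref{thm:bombieri-vinogradov-type} into the weighted sieve of Greaves~\cite[Chapter~5, Proposition~1]{Greaves:01}, which yields the $P_2$ conclusion whenever $\frac{1}{\vartheta(G)} < 2-\delta_2$ with $\delta_2 = 0.044560$ from \cite{Greaves:86}, i.e.\ $\vartheta(G) > 0.5113938\ldots$, and this numerical threshold is then certified by Lemma~\ref{lem:ThetaImproved} exactly as you describe. The only cosmetic difference is that the paper relies specifically on Greaves's sharpened admissible constant $\delta_2$ rather than Richert's original weights (the latter would not reach $a_1\ge 59$), but since you cite Greaves's Chapter~5 for the sieve input the two arguments coincide.
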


It is well known (see for instance Greaves~\cite[Chapter~5]{Greaves:01}) that results on the level of distribution of a set $A(x)$ of positive integers less than $x$ can be used to get results on the number of almost primes contained in $A(x)$. In particular, if the level of distribution of $A(x)$ is $x^{\vartheta-\varepsilon}$ with $\vartheta$ large enough to satisfy $\frac {1}{\vartheta} < 2 - \delta_2$  for a certain constant $\delta_2$, then the number of almost primes in $A(x)$ can be estimated from below by a constant times $\frac{x}{\log x}$. There has been a lot of effort to get the constant $\delta_2$ as small as possible. To our knowledge, currently the best value is $\delta_2=0.044560$ and this is due to  Greaves~\cite{Greaves:86} (although $\delta_2$ is conjectured to be equal to $0$). Thus in order to prove Corollary~\ref{cor1} we need to make sure that 
\begin{equation}\label{eq:thetabound}
\vartheta(G) > 0.5113938... = 1-0.4886061... 
\end{equation}
for the linear recurrence bases indicated in its statement. The lower bound $59$ (resp.\ $15$) for $a_1$ is an artifact of the methods we are using in the proof. However, in principle our method allows (with sufficient computation power) to extend the result to smaller values of $a_1$ (see Section~\ref{sec:smaller-traces} for details on this). However, we do not think that it is feasible to get the result for $a_1=1$ with present time computers.

Our second corollary provides a prime number theorem for numbers whose sum-of-digits function $s_G$ lies in a prescribed residue class. Analogously to the case of the ordinary $q$-ary sum-of-digits function (see \cite[Corollaire~2]{fouvry_mauduit1996:methodes_de_crible}) this corollary gives a nontrivial result only for large values of $a_1$. In the following statement $\Lambda_\ell = \mu * \log^\ell$ denotes the generalized von Mangoldt function ($\ell \ge 1$; here $\mu$ is the M\"obius function and ``$*$'' denotes Dirichlet convolution).

\begin{cor}\label{cor2}
Let $G=(G_j)_{j\geq0}$ be a linear recurrence base with characteristic polynomial $X^{d}-a_1X^{d-1}-\cdots-a_{d-1}X-a_d$ satisfying the conditions of Definition~\ref{def:lrb}. Let $\ell,r,s \in \N$ with $\ell \ge 2$ and  $\gcd(a_1+\cdots+a_d-1,s)=1$. Then there is $x_0=x_0(G,s,\ell)$ such that for $x\ge x_0$ we have
\[
\sum_{\substack{k<x \\ s_G(k)\equiv r\bmod s}} \Lambda_\ell(k) = \frac{\ell}{s}x(\log x)^{\ell-1}
\Big(
1+ \mathcal{O} \Big(\frac{(\log\log a_1)^5}{\log a_1}\Big)
\Big),
\]
where the implied constant depends only on $s$ and $\ell$.
\end{cor}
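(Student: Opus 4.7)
The proof follows the pattern of \cite[Corollaire~2]{fouvry_mauduit1996:methodes_de_crible}: the level of distribution furnished by Theorem~\ref{thm:bombieri-vinogradov-type} is fed into Bombieri's asymptotic sieve for the generalized von Mangoldt function (see, e.g., \cite[Chapter~25]{FI:10}), applied to the sequence
\[
\mathcal{A}(x) := \{k < x : s_G(k) \equiv r \pmod{s}\}.
\]
Specializing Theorem~\ref{thm:bombieri-vinogradov-type} to $q=1$ (or invoking the classical equidistribution of $s_G$ modulo $s$ under $\gcd(a_1+\cdots+a_d-1,s)=1$, cf.\ \cite{Lamberger-Thuswaldner:03}) yields $\#\mathcal{A}(x)=x/s+O(x^{1-\eta})$ for some $\eta>0$, so $\mathcal{A}(x)$ has density $\kappa=1/s$. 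The full statement of Theorem~\ref{thm:bombieri-vinogradov-type} provides the Bombieri–Vinogradov type level of distribution $x^{\vartheta(G)-\varepsilon}$ for this sequence.

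Bombieri's asymptotic sieve then produces, for each $\ell\ge 2$, an asymptotic formula of the shape
\[
\sum_{k\in\mathcal{A}(x)} \Lambda_\ell(k) = \frac{\ell}{s}\, x (\log x)^{\ell-1}\bigl(1 + O_\ell(\psi(\vartheta(G)))\bigr),
\]
where $\psi(\vartheta)\to 0$ as $\vartheta\to 1$, with a bound of the form $\psi(\vartheta) \ll (1-\vartheta)\bigl(\log\frac{1}{1-\vartheta}\bigr)^{c}$ for an explicit constant $c$. This is the quantitative shape exploited in the $q$-ary setting of Fouvry and Mauduit, and it transports verbatim to our situation since Bombieri's sieve takes only the density $\kappa$ and the level of distribution $\vartheta$ as arithmetic inputs.

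To convert this into the form claimed in the corollary, we appeal to the remark following Theorem~\ref{thm:bombieri-vinogradov-type}, which gives $\vartheta(G) \ge 1 - \log_\alpha(m_G+3)$, combined with $m_G \ll \log a_1$ (Lemma~\ref{lem:mest}) and $\alpha \ge a_1$ from \eqref{eq:alpharange}. These yield
\[
1-\vartheta(G) \ll \frac{\log\log a_1}{\log a_1}, \qquad \log\frac{1}{1-\vartheta(G)} \ll \log\log a_1,
\]
so $\psi(\vartheta(G)) \ll (\log\log a_1)^{c+1}/\log a_1$. The threshold $x_0=x_0(G,s,\ell)$ is dictated by the implicit constants in the sieve. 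The main obstacle is precisely this second step: the quantitative dependence of the sieve error on $\vartheta$ has to be tracked through the combinatorial identities underpinning Bombieri's sieve in order to extract the advertised power $(\log\log a_1)^5$; the work has essentially been carried out in \cite{fouvry_mauduit1996:methodes_de_crible} for the $q$-ary case and needs no modification here, since from the sieve's point of view our $\mathcal{A}(x)$ looks formally identical to the Fouvry–Mauduit sequence.
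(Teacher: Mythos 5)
Your proposal is correct and follows essentially the same route as the paper, which itself only states that Corollary~\ref{cor2} follows from Theorem~\ref{thm:bombieri-vinogradov-type} via Bombieri's sieve and that the argument is \emph{verbatim} that of \cite[Section~VII]{fouvry_mauduit1996:methodes_de_crible}. Your additional bookkeeping --- density $1/s$ from equidistribution of $s_G$ modulo $s$, level of distribution $x^{\vartheta(G)-\varepsilon}$ as sieve input, and $1-\vartheta(G)\ll \log\log a_1/\log a_1$ from $\vartheta(G)\ge 1-\log_\alpha(m_G+3)$ together with Lemma~\ref{lem:mest} and \eqref{eq:alpharange} --- is exactly the intended quantitative content and is consistent with the exponent $5$ in the error term.
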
 

Corollary~\ref{cor2} follows from Theorem~\ref{thm:bombieri-vinogradov-type} by an application of the sieve of Bombieri ({\it cf}.~\cite[Theorem~3.5]{FI:10}). Since the proof of Corollary~\ref{cor2} is {\it verbatim} the same as the one of \cite[Corollaire~2]{fouvry_mauduit1996:methodes_de_crible} in \cite[Section~VII]{fouvry_mauduit1996:methodes_de_crible} we do not reproduce it here.

\smallskip

The paper is organized as follows. In Section~\ref{sec:rewriting-problem} we reduce the problem of proving Theorem~\ref{thm:bombieri-vinogradov-type} to an exponential sum estimate and provide some preliminaries. Section~\ref{sec:estimates} is devoted to the estimate of the exponential sums needed in the proof. In Section~\ref{sec:smaller-traces} we give a computer assisted improvement for these estimates to make them applicable for small values of the coefficient $a_1$. Using these preparations in Section~\ref{sec:proof} we provide the proof of  Theorem~\ref{thm:bombieri-vinogradov-type} and of Corollary~\ref{cor1}. Moreover, we provide an estimate for $\vartheta(G)$ for small values of $a_1$.

\section{Rewriting the problem}\label{sec:rewriting-problem}
Let $G=(G_j)_{j\geq0}$ be a linear recurrence base with characteristic polynomial $X^{d}-a_1X^{d-1}-\cdots-a_{d-1}X-a_d$ satisfying the conditions of Definition~\ref{def:lrb}. The proof of Theorem~\ref{thm:bombieri-vinogradov-type} relies on exponential sums. Setting $e(z)=\exp(2\pi \sqrt{-1}  z)$ we get for integers $a,b,c$ with $c\geq1$ that
\[
\frac1c\sum_{h=1}^ce\left(\frac
    hc(a-b)\right)=\begin{cases}1&\text{if }a\equiv b\bmod c,\\0&\text{otherwise}.\end{cases}
\]
Thus the difference inside the absolute value of \eqref{eq:thm11} may be written as
\begin{align*}
  R(z)=R(z;u,q,r,s)&=
  \sum_{\substack{k<z\\s_G(k)\equiv r\bmod s\\ k\equiv u\bmod
      q}}1
  -\frac1q\sum_{\substack{k<z\\s_G(k)\equiv r\bmod s}}1\\
  &=\frac1{sq}\sum_{b=1}^s\sum_{h=1}^{q-1}\sum_{k<z}
   e\left(\frac{b}{s}(s_G(k)-r)+\frac {h}{q}(k-u)\right).
\end{align*}
Splitting the contribution of $b=s$ apart we get that
\begin{equation}\label{eq:R2}
R(z)=\frac1{sq}\sum_{b=1}^{s-1}\sum_{h=1}^{q-1}
  e\left(-\frac{br}s-\frac{uh}q\right) \sum_{k<z}e\left(\frac{b}{s}s_G(k)+\frac{h}{q}k\right)+\mathcal{O}\left(\frac{q}{s}\right).
\end{equation}

In view of \eqref{eq:R2} the proof of Theorem \ref{thm:bombieri-vinogradov-type} boils down
to showing that
\begin{gather*}
  \sum_{Q<q\leq 2Q} \sum_{h=1}^{q-1} \left|    \sum_{k<z}e\left(\frac{r}{s}s_G(k)+\frac{h}{q}k\right)   \right| \ll Qx(\log
  2x)^{-A}
\end{gather*}
holds for each $A>0$ if $1\le r\le s-1$, $Q\leq x^{\vartheta(G)-\varepsilon}$, and $z<x$. To make our proofs easier we want to subdivide the sum over $k$ according to the greedy expansion \eqref{eq:greedyexp} of $z$ (of course we may assume w.l.o.g.\ that $z$ is a positive integer). Since $z < x$ there is $N \le \log_\alpha x+C$ (for some constant $C$ depending on $G$) such that
\begin{equation}\label{eq:greedyy}
z = \sum_{0\le n\le N} \varepsilon_n(z) G_n.
\end{equation}
For $y,\beta\in[0,1]$ we define the following exponential sum
\[
S_{n}(y,\beta):=\sum_{k<G_n}e\left(\beta s_G(k)+y k\right).
\]
Using  \eqref{eq:greedyy} we gain by splitting off one digit of $z$ after the other (like it is done for instance in the proof of \cite[Lemma~1]{Grabner-Tichy:90}),
\begin{align*}
\left|\sum_{k<z}e\left(\frac{r}{s}s_G(k)+\frac{h}{q}k\right)\right|
\le  \sum_{n=0}^N \left|  \varepsilon_n(y) S_n\left(\frac {h}{q},\frac{r}{s}\right) \right| \ll  \sum_{n=0}^N \left|S_n\left(\frac {h}{q},\frac{r}{s}\right)\right|.
\end{align*}
Thus, since $N\le \log_\alpha x+C$, Theorem \ref{thm:bombieri-vinogradov-type} follows  if we prove 
\begin{gather}\label{rewritten-estimate}
  \sum_{Q<q\leq 2Q} \sum_{h=1}^{q-1} \left|S_n\left(\frac {h}{q},\frac{r}{s}\right)\right| \ll Qx(\log
  2x)^{-A}
\end{gather}
for each $A>0$  if $1\le r\le s-1$, $Q\leq x^{\vartheta(G)-\varepsilon}$, and $n\le \log_\alpha x+C$.

We start by setting up a recurrence relation for $S_{n}(y,\beta)$. 
Let
\begin{equation}\label{eq:I}
\mathcal{I}:=\{1\leq j\leq d\colon a_j\neq0\}
\end{equation}
be the set of indices corresponding to non-vanishing coefficients of the characteristic polynomial of $G$. As $a_d>0$, item~\eqref{it:it3} of Definition~\ref{def:lrb} implies that
$\{1,d\}\subset\mathcal{I}$. Then the exponential sum $S_{n}(y,\beta)$
satisfies the recurrence
\begin{equation}\label{eq:Srec}
S_n(y,\beta)=\sum_{j\in\mathcal{I}}A_{n,j}(y,\beta)S_{n-j}(y,\beta)
\end{equation}
with
\begin{equation}\label{eq:A}
A_{n,j}(y,\beta)=\sum_{\ell=0}^{a_j-1}e\bigg(y\bigg(\sum_{k=1}^{j-1}a_kG_{n-k}+\ell G_{n-j}\bigg)+\beta\bigg(\sum_{k=1}^{j-1}a_k+\ell \bigg)\bigg)
\end{equation}
for $1\le j\le d$ (see \cite[Equation
(3)]{Lamberger-Thuswaldner:03}). Iterating this recurrence relation we obtain
\begin{equation*}\label{eq:iterate}
  \begin{split}
    S_{n}(y,\beta)&=\sum_{j\in\mathcal{I}}A_{n,j}(y,\beta)S_{n-j}(y,\beta)\nonumber\\
    &=\sum_{j_1,j_2\in\mathcal{I}}A_{n,j_1}(y,\beta)A_{n-j_1,j_2}(y,\beta)S_{n-j_1-j_2}(y,\beta)\\
    &=\sum_{j_1,\ldots,j_k\in\mathcal{I}}A_{n,j_1}(y,\beta)\cdots
    A_{n-j_1-\cdots-j_{k-1},j_k}(y,\beta)S_{n-j_1-\cdots-j_k}(y,\beta),
  \end{split}
\end{equation*}
which makes sense as long as $n-j_1-\cdots-j_{k-1} \geq d$ holds for
all constellations $(j_1,\ldots,j_{k-1})\in\mathcal{I}^{k-1}$. For
$d\leq n_0< n$ and $1\leq k < n$ let 
\begin{equation}\label{eq:iterate2}
J_k(n_0)=\left\{\mathbf{j}=(j_1,\ldots,j_k)\in\mathcal{I}^k\colon
  n-\sum_{\ell=1}^{k-1} j_\ell>n_0\geq n-\sum_{\ell=1}^{k}j_\ell\right\}.
\end{equation}
Then
\begin{equation}\label{eq:Iprodbis}
    \lvert S_{n}(y,\beta)\rvert
    \leq\sum_{k=1}^{n-n_0}\sum_{(j_1,\ldots,j_k)\in
      J_k(n_0)}\prod_{\ell=1}^k\lvert
    A_{n-\sum_{r=1}^{\ell-1}j_r,j_\ell}(y,\beta)\rvert \cdot
    \lvert S_{n-\sum_{r=1}^{k}j_r}(y,\beta)\rvert.
\end{equation}

The central idea in proving \eqref{rewritten-estimate} is a combination of max- and $1$-norm estimates of $S_n(y,\beta)$ and related expressions. 

\section{Estimates of exponential sums related to $S_n(y,\beta)$}\label{sec:estimates}
We subdivide this section into three parts. First we consider the $1$-norm of $S_{n}(\cdot,\beta)$ and of its derivative.  These $1$-norms play a role in the proof of Theorem~\ref{thm:bombieri-vinogradov-type} after an application of an inequality due to Sobolev and Gallagher which is an important tool in the context of the large sieve (see Lemma~\ref{mo:lem1.2} below for its statement). In the second part we estimate the maximum-norm of sums of certain products related to $S_{n}(y,\beta)$. 
Finally the third part deals with an estimation of a parameter which occurs in our estimate of the $1$-norm of $S_{n}(\cdot,\beta)$.

\subsection{The $1$-norm of $S_{n}(\cdot,\beta)$}\label{sec:1n}
Let $G=(G_j)_{j\ge 0}$ be a linear recurrence base with characteristic polynomial $X^{d}-a_1X^{d-1}-\cdots-a_{d-1}X-a_d$
satisfying the conditions of Definition~\ref{def:lrb}. We set $a=a_1$ and let $\beta\in\R$ be fixed.
Define for $k\in \N$, $j\in \mathcal{I}$ with $k\ge j$, and $y\in\R$ the functions
\[
f_{k,j}(y)=
\begin{cases}
  \lvert\frac{\sin\left(\pi a_j(\beta+yG_{k-j})\right)}{\sin\left(\pi(\beta+yG_{k-j})\right)}\rvert &  \hbox{if } \beta+yG_{k-j} \not\in \mathbb{Z}, \\
  a_j & \hbox{if } \beta+yG_{k-j} \in \mathbb{Z}.
\end{cases}
\]
This permits us to write the modulus of the sums $A_{k,j}(y,\beta)$ in \eqref{eq:A}  as
\begin{equation}\label{eq:Af}
\lvert A_{k,j}(y,\beta)\rvert
=\lvert\sum_{\ell=0}^{a_j-1}e\left(\ell(\beta +yG_{k-j})\right)\rvert
=f_{k,j}(y).
\end{equation}
We note that the numerator of $f_{k,j}(y)$ has period
$(a_jG_{k-j})^{-1}$. 

For each $k\in \N$ we subdivide the interval
$\left[-\frac\beta{G_{k}},1-\frac{\beta}{G_{k}}\right)$ (which is the same
as $[0,1)$ modulo $1$) into $aG_{k}$ parts
\begin{equation}\label{eq:partition}
I_{k}(b)=\left[\frac{b-a\beta}{aG_{k}},\frac{b+1-a\beta}{aG_{k}}\right) \qquad(0\leq b<aG_{k})
\end{equation}
of equal length $(aG_{k})^{-1}$. In each of the intervals $I_{k-j}(b)$
the supremum of $f_{k,j}(y)$ satisfies
\begin{equation}\label{eq:supn}
\sup_{y\in I_{k-j}(b)}f_{k,j}(y) = m(j,b) \qquad(0\leq b<a G_{k-j}),
\end{equation}
with
\begin{equation}\label{eq:mab}
m(j,b)=m_G(j,b)=\sup_{y\in (\frac b{a},\frac{b+1}{a})}\left\vert\frac{\sin \pi a_j y}{\sin \pi y}\right\vert \qquad(j\in\mathcal{I},\, b\in\Z).
\end{equation}
Thus the supremum of $\lvert A_{k,j}(y,\beta)\rvert=f_{k,j}(y)$ in (\ref{eq:supn}) is independent of $k$. 
It is immediate that for
$b\equiv 0,a-1\pmod{a}$ this supremum is equal to $a_j$ (it is attained
for $b\equiv 0 \pmod{a}$ on the left endpoint of $I_{k-j}(b)$, and for
$b\equiv a-1\pmod{a}$ for the limit towards the right endpoint of
$I_{k-j}(b)$). If $j=1$ and $b\not\equiv 0,a-1\pmod{a}$ then
$f_{k,1}(y)$ is a unimodal function on $I_{k-1}(b)$ which is equal to
zero at its endpoints and whose global maximum is the unique local
maximum in that interval. 

We define the piecewise constant function
\begin{equation}\label{eq:Fdef}
F_{k,j}(y)= m(j,b) \quad\hbox{for}\quad y \in  I_{k-j}(b) \qquad (0 \le b < aG_{k-j}),
\end{equation}
which forms an upper bound for $f_{k,j}(y)$. The functions
$f_{k,1}(y)$ and $F_{k,1}(y)$ are plotted in Figure~\ref{fig:Psi} for a special set of
parameters.

We will also need the following notations. With $m(j,b)$ as in \eqref{eq:mab} we define
\begin{equation}\label{eq:ma}
m(j)=m_G(j)= \frac1a\sum_{b=0}^{a-1} m_G(j,b) \qquad(j\in\mathcal{I}).
\end{equation}
and finally
\begin{equation}\label{eq:ma0}
m=m_G=\max_{j\in \mathcal{I}} m_G(j).
 \end{equation}

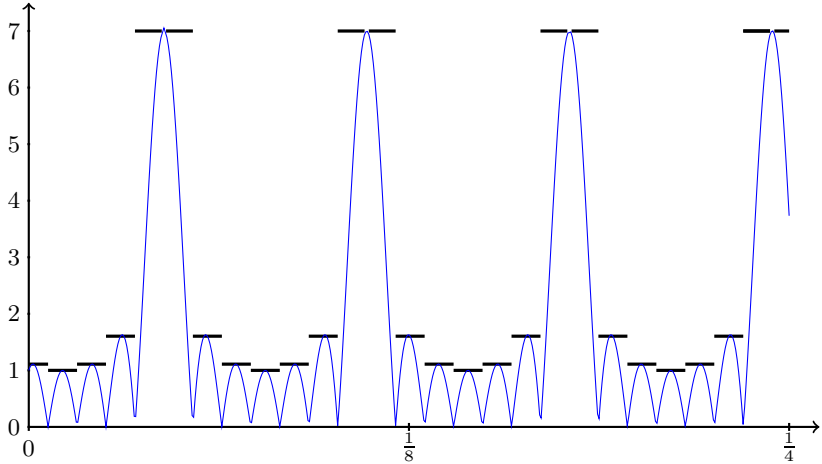
\begin{figure}[ht]
\begin{center}
\begin{tikzpicture}
\begin{scope}[xscale=40,yscale=0.75]
\def\ya{7.0};
\def\yb{{abs(1/sin(3*pi/14 r))}};
\def\yc{{abs(1/cos(pi/7 r))}};
\def\yd{1.0};

\draw[very thick] (0,\yc)--(2/315,\yc) (74/315,\ya)--(1/4,\ya);

\foreach \i in {11,14,32,35,53,56,74}
  \draw[very thick] (\i/315,\ya)--({(\i+3)/315},\ya);
\foreach \i in {8,17,29,38,50,59,71}
  \draw[very thick] (\i/315,\yb)--({(\i+3)/315},\yb);
\foreach \i in {5,20,26,41,47,62,68}
  \draw[very thick] (\i/315,\yc)--({(\i+3)/315},\yc);
\foreach \i in {2,23,44,65}
  \draw[very thick] (\i/315,\yd)--({(\i+3)/315},\yd);
\foreach \i in {14,35,56,77}
  \draw[ultra thick,white] (\i/315,0.5)--(\i/315,7.5);

\draw[thick,->] (0,0)--(0.26,0);
\draw[thick] (0,0)--(0,-2pt) node [below] {\small $0$};
\draw[thick] (1/8,-2pt)--(1/8,2pt) node [below] {\small $\tfrac18$};
\draw[thick] (1/4,-2pt)--(1/4,2pt) node [below] {\small $\tfrac14$};
\draw[thick,->] (0,0)--(0,7.5);
\foreach \y in {0,...,7}
  \draw[thick] (-3/6741,\y)--(3/6741,\y) node [left] {\small $\y$};

\draw[blue,domain=0:1/4,samples=500] plot (\x,{abs(sin(pi*7*(1/3+\x*15) r)/sin(pi*(1/3+\x*15) r))} );
\end{scope}
\end{tikzpicture}
\end{center}
\caption{For the linear recurrence base $(G_j)_{j\ge 0}$
  defined by $G_0=1$, $G_1=8$, and $G_{n+2}=7G_{n+1}+G_{n}$ for $n\ge 0$ this image shows the function
  $f_{3,1}(y)$ together with its piecewise constant upper bound
  $F_{3,1}(y)$ in the interval $y\in[0,\frac14]$ (here we chose
  $\beta =\frac13$).  \label{fig:Psi}}
\end{figure}

It will turn out that the $1$-norm of $S_{n}(\cdot ,\beta)$ can be estimated in
terms of an integral over products of the functions $F_{k,j}(y)$ 
Thus we deal with such products in our first proposition.

\begin{prop}\label{prop:Fprod} 
  Let $G=(G_j)_{j\ge 0}$ be a linear recurrence base with
  characteristic polynomial $X^{d}-a_1X^{d-1}-\cdots-a_{d-1}X-a_d$
  satisfying the conditions of Definition~\ref{def:lrb}. Fix
  $k\in \mathbb{N}$ and let $n_0,n_1,\ldots,n_k$ be a strictly
  increasing sequence of integers satisfying
  $j_{\ell}:=n_\ell-n_{\ell-1}\in\mathcal{I}$ for $1\le \ell\le
  k$. Then
\begin{equation}\label{eq:pFprod}
  \int_0^1 \prod_{\ell=1}^k f_{n_\ell, j_\ell}(y) \mathrm{d}y\le \int_0^1 \prod_{\ell=1}^k F_{n_\ell, j_\ell}(y) \mathrm{d}y \ll (m+2)^k,
\end{equation}
where $m=m_G$ is given by \eqref{eq:ma0}.
\end{prop}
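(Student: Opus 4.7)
The first inequality in \eqref{eq:pFprod} is immediate: combining \eqref{eq:supn} with \eqref{eq:Fdef} yields $f_{n_\ell, j_\ell}(y) \le F_{n_\ell, j_\ell}(y)$ pointwise on $[0,1)$, and all factors are nonnegative, so the product inequality passes through to the integral. The substance therefore lies in proving $\int_0^1 \prod_{\ell=1}^k F_{n_\ell, j_\ell}(y)\,dy \ll (m+2)^k$. The plan is to establish the recursive estimate
\[
\int_0^1 \prod_{\ell=1}^k F_{n_\ell, j_\ell}(y)\,dy \le (m+2) \int_0^1 \prod_{\ell=1}^{k-1} F_{n_\ell, j_\ell}(y)\,dy
\]
and iterate it from the trivial base case $k=0$.

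To carry out the induction step I would first record a single-factor identity. By the definitions \eqref{eq:mab}, \eqref{eq:ma}, and \eqref{eq:Fdef}, each $F_{n_\ell, j_\ell}$ is $1/G_{n_{\ell-1}}$-periodic and cycles through the $a$ values $m_G(j_\ell,0),\ldots,m_G(j_\ell,a-1)$ on consecutive subintervals of length $1/(aG_{n_{\ell-1}})$, so that
\[
\int_{t/G_{n_{\ell-1}}}^{(t+1)/G_{n_{\ell-1}}} F_{n_\ell, j_\ell}(y)\,dy = \frac{m_G(j_\ell)}{G_{n_{\ell-1}}} \le \frac{m}{G_{n_{\ell-1}}}
\]
for every $t \in \Z$. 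Because $n_0<n_1<\cdots<n_{k-1}$, the factor $F_{n_k,j_k}$ has the shortest period $1/G_{n_{k-1}}$. I would then partition $[0,1)$ into the $G_{n_{k-1}}$ intervals $J_t = [t/G_{n_{k-1}},(t+1)/G_{n_{k-1}})$ and exploit the fact that items \eqref{it:it1} and \eqref{it:it2} of Definition~\ref{def:lrb} force $G_{n+1}/G_n \ge a_1 = a$ for every $n\ge 1$; hence the pieces of every $F_{n_\ell, j_\ell}$ with $\ell<k$ have length at least $1/(aG_{n_{k-2}}) \ge 1/G_{n_{k-1}} = |J_t|$. Consequently, each piece of $\prod_{\ell<k} F_{n_\ell,j_\ell}$ spans a full $J_t$, and the $J_t$ split into \emph{good} ones (on which $\prod_{\ell<k} F_{n_\ell,j_\ell}$ is constant) and \emph{bad} ones (straddling one or more breakpoints of that product). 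On good $J_t$ the period identity immediately gives $\int_{J_t}\prod_\ell F_{n_\ell,j_\ell}\,dy \le m \int_{J_t}\prod_{\ell<k}F_{n_\ell,j_\ell}\,dy$, producing after summation the $m$ term of the factor $m+2$.

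The main obstacle is the contribution from the bad $J_t$. A naive $L^\infty$ bound is hopeless: the number of bad intervals can be as large as $\sum_{\ell<k} aG_{n_{\ell-1}}$, which grows with $k$. My plan is to charge each bad $J_t$ against the value of $\prod_{\ell<k} F_{n_\ell,j_\ell}$ on one of the two adjacent good neighbours $J_{t\pm 1}$, which are guaranteed to exist since the piece lengths of that product exceed $|J_t|$. A careful bookkeeping — in which each good interval can be charged at most twice, once from its left and once from its right neighbour — should bound the aggregate bad contribution by $2\int_0^1 \prod_{\ell<k} F_{n_\ell,j_\ell}\,dy$, producing the additive $2$ in $m+2$ and closing the induction. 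The delicate combinatorial accounting on the bad intervals is where the argument really needs care; everything else is a formal iteration of the periodic integral identity above.
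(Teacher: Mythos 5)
Your first inequality and the single-factor period identity are fine, but the induction step contains a genuine gap: the two structural claims it rests on are both false. First, the pieces of the product $\prod_{\ell<k}F_{n_\ell,j_\ell}$ need \emph{not} have length at least $|J_t|$ --- they are intersections of the pieces of the individual factors, whose breakpoints (at the points $\frac{b-a\beta}{aG_{n_{\ell-1}}}$ for varying $\ell$) interleave and can be arbitrarily close to one another, so a single $J_t$ can contain several breakpoints and the product can have pieces much shorter than $|J_t|$. Second, and more fatally, the claim that every bad $J_t$ has a good neighbour fails already for the single factor $F_{n_{k-1},j_{k-1}}$ in the typical case $j_{k-1}=n_{k-1}-n_{k-2}=1$: its pieces have length $\frac{1}{aG_{n_{k-2}}}$, and since $G_{n_{k-1}}/G_{n_{k-2}}=\alpha+o(1)\in[a_1,a_1+1)$, this is only between $1$ and $1+\frac{1}{a_1}$ times $|J_t|=\frac{1}{G_{n_{k-1}}}$. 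Hence at least a fraction $\frac{a_1}{a_1+1}$ of the intervals $J_t$ contain a breakpoint of this factor, and there are long runs of consecutive bad intervals with no good neighbour to charge against. Even where a good neighbour exists, the value of the product on the bad interval is not controlled by its value on the neighbour up to an absolute constant, since $F_{n_{k-1},j_{k-1}}$ can jump by a factor comparable to $a_{j}$ across a breakpoint. So the charging scheme cannot deliver the additive $2$ in $m+2$, and the ``delicate combinatorial accounting'' you defer is exactly where the argument breaks.

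The paper sidesteps all of this by running the induction in the opposite direction, coarse to fine. It first partitions $[0,1)$ into the intervals $I_{n_0}(b)$ on which the \emph{coarsest} factor $F_{n_1,j_1}$ is constant and pulls out $m(j_1,b_0)$; it then \emph{over-covers} each $I_{n_0}(b_0)$ by $g_1+2$ adjacent intervals $I_{n_1}(b')$ of the next scale (legitimate because the integrand is nonnegative), pulls out the constant value of $F_{n_2,j_2}$ on each, and iterates. The resulting inner sums $\sum_{b=0}^{g_\ell+1}m(j_{\ell+1},c+b)$ are then bounded using the $a$-periodicity of $m(j,\cdot)$ in $b$: when $j_\ell=1$ one gets $\sum_{b=0}^{a+1}m(j,c+b)\le a\,m(j)+2a_j\le a(m(j)+2)$, which is precisely where the ``$+2$'' comes from. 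If you want to salvage your plan, you should adopt this over-covering from the coarse end rather than attempting to repair the fine-end charging argument.
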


\begin{proof}
  Since the first inequality in \eqref{eq:pFprod} is obvious it
  remains to prove the second one. Throughout this proof we set
  $g_\ell = \lfloor {G_{n_\ell}}/{G_{n_{\ell-1}}} \rfloor$. From the
  definition of the intervals $I_k(b)$ it is clear that each interval
  of the form $I_{n_{\ell-1}}(b)$ can be covered by $g_\ell + 2$
  adjacent intervals of the form $I_{n_{\ell}}(b')$. To be more
  precise, there is $c\in\N$ such that
  \begin{equation}\label{eq:covering}
    I_{n_{\ell-1}}(b) \subset I_{n_\ell}(c)\cup I_{n_\ell}(c+1) \cup \dots \cup I_{n_\ell}(c+g_\ell +1).
  \end{equation}
  In the first step of our proof we subdivide
  $[-\beta/G_k, 1-\beta/G_k)$ ($\equiv [0,1)\bmod 1$) into intervals of the form
  $I_{n_0}(b)$ to obtain (recall that $a=a_1$)
  \[
    J:=\int_0^1 \prod_{\ell=1}^k F_{n_\ell, j_\ell}(y) \mathrm{d}y
    =\sum_{b_0=0}^{aG_{n_0}-1}\int_{I_{n_0}(b_0)}\prod_{\ell=1}^k
    F_{n_\ell, j_\ell}(y) \mathrm{d}y.
  \]
  Since $n_0=n_1-j_1$, by definition, we have
  $F_{n_1,j_1}(y)=m(j_1,b_0)$ for $y\in I_{n_0}(b_0)$. Thus we may
  pull this constant out of the integral yielding
  \[
    J\le
    \sum_{b_0=0}^{aG_{n_0}-1}m(j_1,b_0)\int_{I_{n_0}(b_0)}\prod_{\ell=2}^k
    F_{n_\ell, j_\ell}(y) \mathrm{d}y.
  \]
  Now we use \eqref{eq:covering} to cover each $I_{n_0}(b_0)$ by
  $g_1+2$ adjacent intervals of the form $I_{n_1}(b')$. More
  precisely, to each $b_0$ there is an integer $c_1(b_0)$ such that
  \[
    I_{n_0}(b_0) \subset I_{n_1}(c_1(b_0))\cup I_{n_1}(c_1(b_0)+1)
    \cup \ldots \cup I_{n_1}(c_1(b_0)+g_1 +1).
  \]
  Since the integrand is nonnegative this yields the estimate
  \[
    J\le \sum_{b_0=0}^{aG_{n_0}-1}m(j_1,b_0)
    \sum_{b_1=0}^{g_1+1}\int_{I_{n_1}(c_1(b_0)+b_1)}\prod_{\ell=2}^k
    F_{n_\ell, j_\ell}(y) \mathrm{d}y.
  \]
  As before we have $F_{n_2,j_2}(y)=m(j_2,c_1(b_0)+b_1)$ for
  $y\in I_{n_1}(c_1(b_0)+b_1)$ and we may pull this constant out of
  the integral again to get
  \[
    J\le \sum_{b_0=0}^{aG_{n_0}-1}m(j_1,b_0)
    \sum_{b_1=0}^{g_1+1}m(j_2,c_1(b_0)+b_1)\int_{I_{n_1}(c_1(b_0)+b_1)}\prod_{\ell
      =3}^k F_{n_\ell, j_\ell}(y) \mathrm{d}y.
  \]
  We may iterate this procedure $k-1$ times to subsequently pull
  out all factors from the integral. After this we end up with (the functions $c_2,\ldots, c_{k-1}$ are chosen in accordance with \eqref{eq:covering})
  
  \begin{equation}\label{eq:JJ}
    \begin{split}
      J\le & \sum_{b_0=0}^{aG_{n_0}-1}m(j_1,b_0)
      \sum_{b_1=0}^{g_1+1}m(j_2,c_1(b_0)+b_1) \cdots
      \sum_{b_{k-1}=0}^{g_{k-1}+1}m(j_{k},c_{k-1}(b_0,\ldots, b_{k-2})+b_{k-1}) \\
      &
      \cdot\int_{I_{n_{k-1}}(c_{k-1}(b_1,\ldots,b_{k-2})+b_{k-1})}\mathrm{d}y \\
      =& \frac{1}{aG_{n_{k-1}}}\sum_{b_0=0}^{aG_{n_0}-1}m(j_1,b_0)
      \sum_{b_1=0}^{g_1+1}m(j_2,c_1(b_0)+b_1) \cdots
      \sum_{b_{k-1}=0}^{g_{k-1}+1}m(j_{k},c_{k-1}(b_0,\ldots,
      b_{k-2})+b_{k-1}).
    \end{split}
  \end{equation}
  Thus we have to deal with sums of the form
  \[
    K_\ell=\sum_{b=0}^{g_\ell +1} m(j_{\ell+1},c+b) \qquad(c\in\Z,\, \ell\in\{1,\ldots, k-1\}).
  \]
  We distinguish two cases according to whether $j_\ell=n_\ell-n_{\ell-1}=1$
  or not. If $j_\ell=n_\ell - n_{\ell-1}=1$ then, for $\ell$ sufficiently
  large, $g_\ell = a$ (because the dominant root satisfies \eqref{eq:alpharange}) and, hence, for $j=j_{\ell+1}$ we get
  \begin{equation}\label{eq:K1est}
    \begin{split}
      K_\ell&= \sum_{b=0}^{a+1} m(j,c+b) \le a m(j) + 2 \max_{0\le b<a}m(j,b) \le am(j)+2a_j \le a(m(j)+2)\\
      &\le \frac{G_{n_\ell}}{G_{n_{\ell-1}}}(m(j)+2).
    \end{split}
  \end{equation}
  If $j_\ell = n_\ell - n_{\ell-1}>1$ for $\ell$ sufficiently large,
  $g_\ell \ge a^2$ and we may write $g_\ell+1 = ha+r$ with $h\ge a$ and
  $0\le r < a$ yielding (for $j=j_{\ell+1}$)
  \begin{align*}
      K_\ell&=\sum_{t=0}^{h-1} \sum_{u=0}^{a-1} m(j,c+t a+u)+ \sum_{u=0}^{r} m(j,c+ha+u) \\
      &\le ha\,m(j)+(r+1)a_j \le ha(m(j)+1) \le
      \frac{G_{n_\ell}}{G_{n_{\ell-1}}}(m(j)+1).
  \end{align*}
Inserting this in \eqref{eq:JJ} for all sufficiently large $\ell$ and observing that 
$
\sum_{b_0=0}^{aG_{n_0}-1}m(j_1,b_0) \le 
a^2G_{n_0}
$
we get the result.
\end{proof}

\begin{prop}\label{prop:1norm}
  Let $G=(G_j)_{j\ge 1}$ be a linear recurrence base with
  characteristic polynomial $X^{d}-a_1X^{d-1}-\cdots-a_{d-1}X-a_d$
  satisfying the conditions of Definition~\ref{def:lrb}. Fix
  $k,n\in \mathbb{N}$ and let $j_1,j_2,\ldots,j_k\in\mathcal{I}$. Then
  \[
    \int_0^1 \prod_{\ell=1}^k\lvert
    A_{n-\sum_{r=1}^{\ell-1}j_r,j_\ell}(y,\beta)\rvert
    \mathrm{d}y \ll (m+2)^k,
  \]
  where $m=m_G$ is as in \eqref{eq:ma0}.
\end{prop}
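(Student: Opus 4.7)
The plan is to deduce this estimate as an immediate consequence of Proposition~\ref{prop:Fprod} after rewriting the integrand in terms of the functions $f_{k,j}$ and reindexing so that the subscripts form a strictly increasing sequence.

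First, by \eqref{eq:Af} we have $\lvert A_{k,j}(y,\beta)\rvert = f_{k,j}(y)$, so the integrand equals $\prod_{\ell=1}^k f_{m_\ell, j_\ell}(y)$ where $m_\ell := n - \sum_{r=1}^{\ell-1} j_r$; thus $m_1 = n$ and the consecutive differences satisfy $m_\ell - m_{\ell+1} = j_\ell \in \mathcal{I}$. The sequence $(m_\ell)_{1 \le \ell \le k}$ is therefore strictly \emph{decreasing}, whereas Proposition~\ref{prop:Fprod} assumes a strictly \emph{increasing} sequence of subscripts. To align the two set-ups I would simply reverse the order of indexing: set $n_\ell := m_{k+1-\ell}$ for $1 \le \ell \le k$ together with $n_0 := n - \sum_{r=1}^k j_r$. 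Then $(n_\ell)_{0 \le \ell \le k}$ is strictly increasing and satisfies $n_\ell - n_{\ell-1} = j_{k+1-\ell} \in \mathcal{I}$, so the hypotheses of Proposition~\ref{prop:Fprod} are met with step sequence $j_\ell' := j_{k+1-\ell}$.

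Since the substitution $\ell \mapsto k+1-\ell$ merely permutes the factors of the product, we have
\[
\prod_{\ell=1}^k f_{m_\ell, j_\ell}(y) = \prod_{\ell=1}^k f_{n_\ell, j_{k+1-\ell}}(y),
\]
and Proposition~\ref{prop:Fprod} applied to the right-hand side delivers the desired bound $\ll (m+2)^k$. There is no substantive obstacle to overcome: the only book-keeping point is that each factor $f_{m_\ell, j_\ell}$ is well defined, i.e.\ $m_\ell \ge j_\ell$ for every $\ell$, which reduces to $n \ge \sum_{r=1}^k j_r$. This inequality is implicit from the context in which these products appear in \eqref{eq:Iprodbis}, where they arise as truncations of the iterated recurrence \eqref{eq:Srec}.
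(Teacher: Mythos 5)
Your proof is correct and follows essentially the same route as the paper: rewrite each factor via \eqref{eq:Af} as $f_{n-\sum_{r=1}^{\ell-1}j_r,\,j_\ell}(y)$ and apply Proposition~\ref{prop:Fprod}. The paper leaves the order-reversal reindexing and the well-definedness condition $n\ge\sum_{r=1}^k j_r$ implicit, so your version merely makes explicit the bookkeeping the paper glosses over.
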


\begin{proof}
  Using \eqref{eq:Af} 
  the
  product may be rewritten as
  \begin{equation*}
      \prod_{\ell=1}^k\lvert
      A_{n-\sum_{r=1}^{\ell-1}j_r,j_\ell}(y,\beta)\rvert
      =\prod_{\ell=1}^kf_{n-\sum_{r=1}^{\ell-1} j_r,j_\ell}(y)
\end{equation*}
The last product satisfies the conditions of Proposition~\ref{prop:Fprod} and we obtain our result by applying this proposition.
\end{proof}

We now state our estimate for the $1$-norm of $S_n(\cdot,\beta)$. Note that in the following result the estimate $\Vert S_{n}(\cdot,\beta)  \Vert_1 \ll \alpha^{\frac{n}{2}}$ is derived by easy general arguments (as in the classical case, see \cite[Lemme~7]{MR:10} and \cite[Remarks after Th\'eor\`eme~2 and the beginning of Section IV]{Fouvry-Mauduit:96}).

\begin{prop}\label{prop:S-1norm}
Let $G=(G_j)_{j\ge 0}$ be a linear recurrence base with characteristic polynomial $X^{d}-a_1X^{d-1}-\cdots-a_{d-1}X-a_d$ satisfying the conditions of Definition~\ref{def:lrb} and let $\alpha$ be its dominant root. Then
\[
\int_0^1\lvert S_{n}(y,\beta)\rvert\mathrm{d}y \ll \min\{\alpha^{\frac12},(m+3)\}^n,
\]
where $m=m_G$ is as in \eqref{eq:ma0}. 
\end{prop}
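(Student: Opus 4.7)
The plan is to establish the two bounds in the minimum on the right-hand side of the proposition separately.

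For the bound $\int_0^1 \lvert S_n(y,\beta) \rvert \mathrm{d}y \ll \alpha^{n/2}$, I would apply the standard Cauchy--Schwarz and Parseval argument. The sum $S_n(\cdot,\beta)$ is a trigonometric polynomial in $y$ with integer frequencies in $\{0,1,\ldots,G_n-1\}$, so by Cauchy--Schwarz one has $\lVert S_n(\cdot,\beta)\rVert_1 \le \lVert S_n(\cdot,\beta)\rVert_2$; after squaring and integrating, every cross term vanishes by orthogonality of the exponentials $e(y(k-k'))$ on $[0,1]$, leaving $\lVert S_n(\cdot,\beta)\rVert_2^2 = G_n$. Combining this with \eqref{eq:recasympt} gives $\int_0^1 \lvert S_n(y,\beta) \rvert \mathrm{d}y \ll G_n^{1/2} \ll \alpha^{n/2}$.

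For the sharper bound $(m+3)^n$ the plan is to iterate \eqref{eq:Iprodbis} all the way down to a fixed threshold $n_0$ of order $d$ (concretely $n_0 = d$). With this choice, for every admissible tuple one has $n-\sum_{r=1}^k j_r \le n_0 = d$, so the trivial estimate gives $\lvert S_{n-\sum_r j_r}(y,\beta)\rvert \le G_d \ll 1$ uniformly in $y$. Integrating \eqref{eq:Iprodbis} term by term and applying Proposition~\ref{prop:1norm} to each of the resulting integrals of products $\prod_\ell \lvert A_{\cdot,j_\ell}(y,\beta) \rvert$ then yields
\[
\int_0^1 \lvert S_n(y,\beta) \rvert \mathrm{d}y \ll \sum_{k=1}^{n-d} \lvert J_k(d) \rvert (m+2)^k.
\]
To conclude, I would bound $\lvert J_k(d) \rvert$ by a composition count. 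Each $(j_1,\ldots,j_k) \in J_k(d)$ is an ordered composition of some integer in the window $[n-d,\,n-1]$ into $k$ positive parts lying in $\mathcal{I} \subseteq \{1,\ldots,d\}$; discarding the membership constraint and applying the elementary identity $\binom{M-1}{k-1}$ for compositions of $M$ into $k$ positive parts yields $\lvert J_k(d) \rvert \ll \binom{n-1}{k-1}$. The binomial theorem then produces
\[
\sum_{k=1}^{n} \binom{n-1}{k-1}(m+2)^k = (m+2)\,(m+3)^{n-1} \ll (m+3)^n,
\]
which is the required estimate.

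The only delicate step is the bookkeeping just above: one has to verify that relaxing both the membership constraint $j_\ell \in \mathcal{I}$ and the precise two-sided range in the definition of $J_k(d)$ costs at most an absolute constant factor, so that the binomial identity genuinely controls the weighted sum. Everything else (the $L^2$ bound and the invocation of Proposition~\ref{prop:Fprod} through Proposition~\ref{prop:1norm}) is routine.
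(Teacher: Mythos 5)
Your proposal is correct and follows essentially the same route as the paper: the $\alpha^{n/2}$ bound via Cauchy--Schwarz and Parseval, and the $(m+3)^n$ bound by iterating \eqref{eq:Iprodbis} down to $n_0=d$, bounding the terminal factor trivially, invoking Proposition~\ref{prop:1norm}, and controlling $\#J_k(d)$ by a composition count $\ll\binom{n}{k}$ before applying the binomial theorem. The paper's own counting (via the sets $C_{k,d}(n-j)$) is the same bookkeeping you flag as the only delicate step, and it indeed costs only a constant factor depending on $d$.
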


\begin{proof}
We first show that $\int_0^1\lvert S_{n}(y,\beta)\rvert\mathrm{d}y \ll \alpha^{\frac{n}{2}}$. As in \cite[Lemme~7]{MR:10}, this immediately follows by applying the Cauchy-Schwarz inequality, Parseval's identity. Indeed, using \eqref{eq:recasympt} we obtain
\[
\int_0^1\lvert S_{n}(y,\beta)\rvert\mathrm{d}y
\le 
\bigg(\int_0^1\lvert S_{n}(y,\beta)\rvert^2\mathrm{d}y\bigg)^{\frac{1}{2}}
 = 
\bigg(
 \int_0^1\bigg\vert \sum_{k<G_n}e\left(\beta s_G(k))e(y k\right)\bigg\vert^2\mathrm{d}y
 \bigg)^{\frac{1}{2}}\ll \alpha^\frac{n}{2}.
\]

It remains to prove that $\int_0^1\lvert S_{n}(y,\beta)\rvert\mathrm{d}y \ll (m+3)^n$,
   In view of \eqref{eq:Iprodbis} we have to deal with the cardinality
   of $J_k(d)$ before we can apply Proposition~\ref{prop:1norm}. To this
   matter let 
 \[
 C_{k,d}(n)=\left\{(j_1,\ldots,j_k) \in \{1,\ldots, d\}^k\;:\; n=j_1+\cdots + j_k\right\}.
 \]
 It easy to see that $\#C_{k,d}(n) \le \binom{n-1}{k-1}$ (there exist
 exact formulas, see {\it e.g.}~Abramson~\cite{Abramson:76}). Since
 $ J_k(d) \subset \bigcup_{j=1}^{d}C_{k,d}(n-j) $ we gain
 $\# J_k(d) \ll \binom{n}{k}$.  Using this in \eqref{eq:Iprodbis}
 together with Proposition~\ref{prop:1norm} and the binomial theorem yields
\begin{align*}
    \lVert S_{n}(y,\beta)\rVert_1
    &\leq\sum_{k=1}^{n-d}\sum_{(j_1,\ldots,j_k)\in
      J_k(d)} \int_0^1\prod_{\ell=1}^k\lvert
    A_{n-\sum_{r=1}^{\ell-1}j_r,j_\ell}(y,\beta)\rvert \cdot
    \lvert S_{n-j_1-\cdots-j_{k}}(y,\beta)\rvert \mathrm{d}y
\\
    & \ll
    \sum_{k=1}^{n-d}\sum_{(j_1,\ldots,j_k)\in
      J_k(d)} \int_0^1\prod_{\ell=1}^k\lvert
    A_{n-\sum_{r=1}^{\ell-1}j_r,j_\ell}(y,\beta)\rvert  \mathrm{d}y \\
&\ll  \sum_{k=1}^{n-d}\sum_{(j_1,\ldots,j_k)\in
      J_k(d)}(m+2)^k \ll  \sum_{k=1}^{n-d} \binom{n}{k} (m+2)^k \ll (m+3)^n. \qedhere
    \end{align*}
\end{proof}

As mentioned at the beginning of Section \ref{sec:estimates} we also need the $1$-norm of the derivative of $S_n(y,\beta)$ with respect to the first variable.

\begin{prop}\label{prop:S'-1norm2}
  Let $G=(G_j)_{j\ge 0}$ be a linear recurrence base with
  characteristic polynomial $X^{d}-a_1X^{d-1}-\cdots-a_{d-1}X-a_d$
  satisfying the conditions of Definition~\ref{def:lrb} and let
  $\alpha$ be its dominant root. Then
  \[
    \int_0^1\lvert \frac{\partial S_{n}}{\partial y}(y,\beta)\rvert\mathrm{d}y \ll \alpha^n \min\{ \alpha^{\frac{n}{2}},(m+3)^n \},
  \]
  where $m=m_G$ is as in \eqref{eq:ma0}.
\end{prop}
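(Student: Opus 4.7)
The plan is to prove the two bounds $\alpha^{3n/2}$ and $\alpha^n(m+3)^n$ inside the minimum separately, mirroring the two-step argument of Proposition~\ref{prop:S-1norm}: the first bound will come from Cauchy--Schwarz and Parseval, the second from differentiating the fully iterated form of the recursion \eqref{eq:Srec}.

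For the $\alpha^{n/2}$-factor, I will write $\partial_y S_n(y,\beta) = 2\pi i\sum_{k<G_n} k\, e(\beta s_G(k)+yk)$ and apply Cauchy--Schwarz followed by Parseval's identity together with \eqref{eq:recasympt} to obtain
\[
\int_0^1\Bigl|\frac{\partial S_n}{\partial y}(y,\beta)\Bigr|\,dy \le 2\pi\Bigl(\sum_{k<G_n} k^2\Bigr)^{1/2} \ll G_n^{3/2} \ll \alpha^{3n/2}.
\]

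For the $(m+3)^n$-factor, I will iterate \eqref{eq:Srec} as in the proof of Proposition~\ref{prop:S-1norm} to obtain the exact equality $S_n(y,\beta) = \sum_{k=1}^{n-d}\sum_{(j)\in J_k(d)} \prod_{\ell=1}^k A_{N_{\ell-1},j_\ell}\cdot S_{N_k}$, with $N_\ell:=n-\sum_{r\le\ell}j_r$, and then differentiate termwise. Each tuple $(j)\in J_k(d)$ contributes a ``base'' term $\prod_\ell A\cdot\partial_y S_{N_k}$ together with $k$ ``derivative'' terms in which one factor $A_{N_{p-1},j_p}$ is replaced by $\partial_y A_{N_{p-1},j_p}$. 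From \eqref{eq:A} we have the trivial $L^\infty$ bound $\|\partial_y A_{n,j}\|_\infty\ll G_n\ll\alpha^n$, while $N_k\le d$ ensures that $|S_{N_k}|$ and $|\partial_y S_{N_k}|$ are bounded by constants depending only on $G$. The key auxiliary estimate needed is
\[
\int_0^1\prod_{\ell\ne p}|A_{N_{\ell-1},j_\ell}(y,\beta)|\,dy \ll (m+2)^{k-1},
\]
which I will prove by adapting the proof of Proposition~\ref{prop:Fprod}: the telescoping subdivision argument proceeds unchanged, but at position $p$ the sum $K_{p-1}$---which in the original argument pulled the constant $m(j_p,\cdot)$ out of the integral---now contributes only the interval count $g_{p-1}+2\ll G_{N_{p-1}}/G_{N_{p-2}}$, so precisely one $(m+2)$-multiplier is lost while the telescoping ratios of the $G_{N_\ell}$ are unaffected. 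Combined with Proposition~\ref{prop:1norm} and the counting bound $|J_k(d)|\ll\binom{n}{k}$ this yields
\[
\int_0^1\Bigl|\frac{\partial S_n}{\partial y}(y,\beta)\Bigr|\,dy \ll \sum_{k=1}^{n-d}\binom{n}{k}\Bigl((m+2)^{k-1}\sum_{p=1}^k G_{N_{p-1}} + (m+2)^k\Bigr).
\]
Since $j_r\ge 1$, \eqref{eq:recasympt} gives $G_{N_{p-1}}\ll\alpha^{n-p+1}$, and summing the resulting geometric series we obtain $\sum_{p=1}^k G_{N_{p-1}}\ll\alpha^n$ uniformly in $(j)$ and $k$; the binomial theorem then yields $\int_0^1|\partial_y S_n|\,dy\ll\alpha^n(m+3)^n$, as desired.

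The main obstacle I expect is the careful bookkeeping in the adapted version of Proposition~\ref{prop:Fprod} with one factor removed, in particular the edge cases $p=1$ (which affects the very first subdivision by $I_{n_0}$) and $p=k$ (which affects the terminal step of the telescoping); these require minor but somewhat tedious modifications of the original argument, while the geometric summation over $p$ that converts a potential loss of a factor $k$ into the harmless constant factor $(1-\alpha^{-1})^{-1}$ is crucial and relies essentially on $\alpha>1$.
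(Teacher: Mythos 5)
Your proof is correct and follows the paper's overall strategy: the bound $\alpha^{3n/2}$ via Cauchy--Schwarz and Parseval is identical, and the bound $\alpha^n(m+3)^n$ likewise comes from differentiating the iterated form of \eqref{eq:Srec} and treating the resulting Leibniz terms with $\lvert \partial_y A_{n,j}\rvert \ll G_n$ and $\# J_k(d)\ll\binom{n}{k}$. The one step you handle genuinely differently is the estimation of the punctured product $\prod_{\ell\neq p}\lvert A_{\cdot,j_\ell}\rvert$ accompanying the differentiated factor. The paper avoids your adapted version of Proposition~\ref{prop:Fprod} entirely: it bounds the factors \emph{preceding} the differentiated position trivially in the sup norm by $a_{j_\ell}<\alpha$, so that together with the derivative bound these contribute exactly a factor $\alpha^{n}$, and only the \emph{tail} of the product is integrated, via the already established Proposition~\ref{prop:1norm}, yielding $(m+2)^{k-i}$; the sum over the position $i$ of the differentiated factor is then absorbed by $\sum_{1\le i\le k}(m+2)^{k-i}\ll(m+2)^{k}$ rather than by your geometric series in powers of $\alpha^{-1}$. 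Your route instead re-runs the telescoping subdivision of Proposition~\ref{prop:Fprod} with one factor deleted, and this does work as you describe: at the deleted position one pays only the covering count, which is comparable to the ratio of consecutive $G$-values, so the telescoping survives and exactly one multiplier $(m+2)$ is lost; the geometric summation over $p$ then uses $\alpha>1$ exactly as you say. The trade-off is that your argument needs a new (if routine) lemma with the attendant edge-case bookkeeping, whereas the paper's splitting into a sup-normed head and an $L^1$-normed tail reuses Proposition~\ref{prop:1norm} verbatim and is shorter; on the other hand your variant is slightly more economical per term (it keeps $k-1$ factors under the integral instead of $k-i$), though this gains nothing in the final exponent.
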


\begin{proof}
Again we use the Cauchy-Schwarz inequality, Parseval identity, and \eqref{eq:recasympt} to show that
\begin{align*}
\int_0^1\bigg\vert  \frac{\partial S_{n}}{\partial y}(y,\beta)\bigg\vert\mathrm{d}y
&\le 
\bigg(\int_0^1\bigg\vert  \frac{\partial S_{n}}{\partial y}(y,\beta)\bigg\vert^2\mathrm{d}y\bigg)^{\frac{1}{2}}\\
 &= 
\bigg(
 \int_0^1\bigg\vert \sum_{k<G_n}2\pi k e\left(\beta s_G(k))e(y k\right)\bigg\vert^2\mathrm{d}y\bigg)^{\frac{1}{2}}
 =\bigg(\sum_{k<G_n} (2\pi k)^2 \bigg)^{\frac12}
 \ll\alpha^\frac{3n}{2}.
\end{align*}

It remains to show that  $\int_0^1\lvert \frac{\partial S_{n}}{\partial y}(y,\beta)\rvert\mathrm{d}y \ll \alpha^n(m+3)^n$. Using Equation (\ref{eq:Iprodbis}) we obtain for the $1$-norm of the
  derivative of $S_n$ that
  \[ 
  \bigg\vert
  \frac{\partial S_{n}}{\partial y}(y,\beta)
  \bigg\vert
  \ll
    \sum_{k=1}^{n}\sum_{(j_1,\ldots,j_k)\in J_k(d)}\sum_{1\leq i\leq
      k}G_{n-\sum_{r=1}^ij_r}\prod_{\substack{\ell=1\\ \ell\neq
        i}}^k\lvert
    A_{n-\sum_{r=1}^{\ell-1}j_r,j_\ell}\left(y,\beta\right)\rvert.
  \]
  Since $G_k\ll\alpha^k$ and  $\lvert A_{k,j}(y,\beta)\rvert\leq a_j<\alpha$ we obtain
  \[    \lVert\frac{\partial S_{n}}{\partial
    y}(y,\beta)\rVert_1
    \ll\sum_{k=1}^{n}\sum_{(j_1,\ldots,j_k)\in J_k(d)}\sum_{1\leq i\leq
      k}\alpha^{n-\sum_{r=1}^{i}j_r}\alpha^{\sum_{r=1}^ij_r}\int_0^1\prod_{\ell=i+1}^{k}\lvert
    A_{n-\sum_{r=1}^{\ell-1}j_r,j_\ell}\left(y,\beta\right)\rvert\mathrm{d}y.
  \]
  Now an application of Proposition~\ref{prop:1norm} yields
  \begin{gather*}
    \lVert\frac{\partial S_{n}}{\partial
    y}(y,\beta)\rVert_1
    \ll\sum_{k=1}^{n}\sum_{(j_1,\ldots,j_k)\in J_k(d)}\alpha^{n}\sum_{1\leq
      i\leq k}(m+2)^{k-i}
    \ll\alpha^n(m+3)^n,
  \end{gather*}
  where we once more used that $J_k(d)\ll\binom{n}{k}$ and the binomial theorem. 
\end{proof}

\begin{rem}\label{rem:better}
  If we deal with particular cases of linear recurrences it is
  possible to improve the estimate in Propositions~\ref{prop:S-1norm} and~\ref{prop:S'-1norm2}
  slightly by the following consideration. Let $\varepsilon >0$ be
  arbitrary. Then there is $N\in\N$ such that
  $ \left\lfloor {G_n}/{G_{n-1}}\right\rfloor + 1 - {G_n}/{G_{n-1}}
  \ge \lfloor \alpha \rfloor +1-\alpha - \varepsilon := u$ holds for all $n\ge N$. Let $r$ be
  the smallest positive integer satisfying $r^{-1} < u$. Since
  $I_n(c)$ is an interval of length $1/aG_n$ for each $b$ there is
  $t \in R_r:=\{0, r^{-1}, \ldots, (r-1)r^{-1}\}$ and $c\in \mathbb{N}$
  such that
\[
I_{n-1}(b) \subset I_{n}(c+t)\cup I_{n}(c+1+t) \cup \ldots \cup I_{n}(c+ \left\lfloor {G_n}/{G_{n-1}}\right\rfloor +t).
\]
We use this instead of \eqref{eq:covering} in the proof of Proposition~\ref{prop:Fprod} whenever $n_\ell-n_{\ell-1} = 1$ and replace the maxima $m(a,b)$ by 
\begin{equation*}\label{eq:mabt}
m_G^{(t)}(j,b)=m^{(t)}(j,b)=\sup_{y\in (\frac {b+t}{a},\frac{b+t+1}{a})}\left\vert\frac{\sin \pi a_j y}{\sin \pi y}\right\vert \qquad(j\in\mathcal{I}, b \in\Z)
\end{equation*}
in these cases. With these modifications we obtain (since we get a better estimate for $K_\ell$ in \eqref{eq:K1est})
\begin{equation}\label{eq:better21}
\int_0^1 \prod_{\ell=1}^k f_{n_\ell, i_\ell}(y) \mathrm{d}y \ll (m^{(r)}+1)^k
\end{equation}
with 
\begin{equation}\label{eq:m(r)}
m_G^{(r)}=m^{(r)}=\max_{j\in \mathcal{I},\,t\in R_r} m^{(t)}(j), 
\end{equation}
where $m_G^{(t)}(j)= \frac1a\sum_{b=0}^{a-1} m_G^{(t)}(j,b)$ for $j\in\mathcal{I}$.
Applying \eqref{eq:better21} in Proposition~\ref{prop:1norm} instead of Proposition~\ref{prop:Fprod} we gain 
\begin{equation}\label{prop1improved}
\int_0^1\lvert S_{n}(y,\beta)\rvert\mathrm{d}y \ll (m^{(r)}+2)^n \quad \hbox{and} \quad  \int_0^1\lvert \frac{\partial S_{n}}{\partial y}(y,\beta)\rvert\mathrm{d}y \ll \alpha^n(m^{(r)}+2)^n.
\end{equation}
\end{rem}

\subsection{The maximum norm of sums related to $S_n(y,\beta)$}
The maximum norm of $S_n(y,\frac{r}{s})$ has been estimated by Lamberger and Thuswaldner~\cite{Lamberger-Thuswaldner:03}. However, for our purposes we require a variant of their estimate. To establish this variant we need some notation and some results from~\cite{Lamberger-Thuswaldner:03}. Let $G=(G_j)_{j\ge 0}$ be a linear recurrence base with characteristic polynomial $X^{d}-a_1X^{d-1}-\cdots-a_{d-1}X-a_d$ satisfying the conditions of Definition~\ref{def:lrb}. Fix $r,s\in\N$ and $y\in\R$ in a way that $\gcd(a_1+\cdots+a_d-1,s)=1$ and $r \not\equiv 0 \pmod{s}$. According to~\cite{Lamberger-Thuswaldner:03}  by iterating \eqref{eq:Srec} in an appropriate way we can obtain a recurrence 
\begin{equation}\label{eq:Srec2}
S_n\Big(y,\frac{r}{s}\Big)=\sum_{j\le D}B_{n,j}\Big(y,\frac{r}{s}\Big)S_{n-j}\Big(y,\frac{r}{s}\Big) \qquad(n\ge D)
\end{equation}
of order $D >d$ with coefficient functions $B_{n,j}(y,\frac{r}{s})$ having the  following properties (for $a_1>1$ this recurrence is written explicitly in \cite[Equation~(5)]{Lamberger-Thuswaldner:03} and for $a_1=1$ it is written in \cite[Equation~(12)]{Lamberger-Thuswaldner:03}; however, we do not need these formulas here):

By \cite[Proposition~1]{Lamberger-Thuswaldner:03} there exist $b_1,\ldots, b_D\in \R$ with $b_j \ge |B_{n,j}(y,\frac{r}{s})|$ for all $1\le j\le D$ and all  $n\in\N$ such that the linear recurrent sequence  
\begin{equation}\label{eq:TN0}
T_{n+D} = \sum_{j=1}^D b_j T_{n+D-j}  \qquad (n\ge 0)
\end{equation}
satisfies
\begin{equation}\label{eq:TN1}
\Big|S_n\Big(y,\frac{r}{s}\Big)\Big| < T_n  \qquad (n \in \N)
\end{equation}
for certain initial values $T_0,\ldots, T_{D-1} \in \R_{>0}$. Moreover, from \cite[Section~4.1]{Lamberger-Thuswaldner:03} we see that there is a constant $\lambda=\lambda(G,s)<1$ such that
\begin{equation}\label{eq:TN2}
\alpha^{\lambda n}  \ll  T_n \ll \alpha^{\lambda n} \qquad (n\in \N).
\end{equation}

We also need an analog of $J_k(n_0)$ from \eqref{eq:iterate2}. For $D\leq n_0< n$ and $1\leq k< n$ let 
\begin{equation}\label{eq:iterate2B}
K_k(n_0)=\left\{\mathbf{j}=(j_1,\ldots,j_k)\in\{1,\ldots,D\}^k\colon
  n-\sum_{\ell=1}^{k-1} j_\ell>n_0\geq n-\sum_{\ell=1}^{k}j_\ell\right\}.
\end{equation}

\begin{prop}\label{prop:S-maxA}
Let $G=(G_j)_{j\ge 0}$ be a linear recurrence base with characteristic polynomial $X^{d}-a_1X^{d-1}-\cdots-a_{d-1}X-a_d$ satisfying the conditions of Definition~\ref{def:lrb}. Let $n,r,s\in\N$ and $y\in \R$ be given in a way that $\gcd(a_1+\cdots+a_d-1,s)=1$ and $r \not\equiv 0 \pmod{s}$. Then for each $n_1\in\{D,\ldots, n-1\}$ we have  
\[   
 \sum_{k=1}^{n-n_1}\sum_{(j_1,\ldots,j_k)\in
    K_k(n_1)}\prod_{\ell=1}^k\lvert
    B_{n-\sum_{r=1}^{\ell-1}j_r,j_\ell}\left(y,\frac{r}{s}\right)\rvert \ll \alpha^{\lambda(n-n_1)},
 \]
where $\lambda=\lambda(G,s)<1$ and the implied constant depends only on the linear recurrence base $G$ and the integer $s$.
\end{prop}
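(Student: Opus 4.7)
The plan is to dominate the sum by the analogous unfolding of the auxiliary recurrent sequence $T_n$ from \eqref{eq:TN0}, and then to use the two-sided bound \eqref{eq:TN2} to extract the decay factor $\alpha^{\lambda(n-n_1)}$.

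First I would use \cite[Proposition~1]{Lamberger-Thuswaldner:03} to replace each $|B_{n-\sum_{r<\ell}j_r,\,j_\ell}(y,r/s)|$ by the real nonnegative constant $b_{j_\ell}$. This is valid because the bound $b_j \ge |B_{n,j}(y,r/s)|$ holds uniformly in $n$ and $y$, so the sum in question is bounded by
\[
\Sigma := \sum_{k=1}^{n-n_1}\sum_{(j_1,\ldots,j_k)\in K_k(n_1)} b_{j_1}b_{j_2}\cdots b_{j_k}.
\]

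Next I would identify $\Sigma$ with the unfolding of $T_n$ down to the level $n_1$. Iterating the recurrence $T_m=\sum_{j=1}^{D} b_j T_{m-j}$, which is valid for every $m\ge D$, and stopping the expansion of a branch $(j_1,\ldots,j_\ell)$ as soon as $n-j_1-\cdots-j_\ell\le n_1$, one obtains by straightforward induction the identity
\[
T_n = \sum_{k=1}^{n-n_1}\sum_{(j_1,\ldots,j_k)\in K_k(n_1)} b_{j_1}\cdots b_{j_k}\, T_{n-j_1-\cdots-j_k},
\]
because $K_k(n_1)$ is precisely the set of index sequences that first cross the threshold $n_1$ at step $k$. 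Since every term in this sum involves an index $n-j_1-\cdots-j_k \in (n_1-D, n_1]$, and since $T_m \gg \alpha^{\lambda m}$ with an implied constant depending only on $G$ and $s$ (and on $D$, which itself depends only on $G$ and $s$), each factor $T_{n-j_1-\cdots-j_k}$ is bounded below by a positive constant times $\alpha^{\lambda n_1}$. Hence
\[
T_n \gg \alpha^{\lambda n_1}\,\Sigma.
\]
Combining this with the upper bound $T_n \ll \alpha^{\lambda n}$ from \eqref{eq:TN2} yields $\Sigma \ll \alpha^{\lambda(n-n_1)}$, which is exactly the claimed estimate.

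The part that needs a bit of care is the unfolding identity for $T_n$: one must check that the parameter $k$ is allowed to range up to $n-n_1$ (so that all branches have indeed dropped to or below $n_1$) and that the recurrence is never applied to an index below $D$ (which is guaranteed by $n_1\ge D$). Beyond this bookkeeping, the argument is essentially soft: no further analytic input is needed, since \eqref{eq:TN1} has already done the work of turning the oscillatory sum into a real positive majorant, and \eqref{eq:TN2} provides the sharp exponential asymptotics. The constant $\lambda<1$ and all implied constants depend only on $G$ and $s$, as required.
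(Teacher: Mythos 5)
Your proposal is correct and follows essentially the same route as the paper: bound each $\lvert B_{\cdot,j_\ell}\rvert$ by $b_{j_\ell}$, recognize the resulting sum weighted by $T_{n-\sum j_r}$ as the exact unfolding of the recurrence \eqref{eq:TN0} down to level $n_1$ (so that it equals $T_n$), and then apply the two-sided bound \eqref{eq:TN2} to the stopped indices in $(n_1-D,n_1]$ and to $T_n$ itself. The paper's proof is precisely this computation, written as a single chain of inequalities.
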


\begin{proof}
Let $(T_n)_{n\ge 0}$ be the linear recurrent sequence given by \eqref{eq:TN0} (with initial values satisfying \eqref{eq:TN1}). By the definition of $b_1,\ldots,b_D$, and $K_k(n_1)$ we get, using \eqref{eq:TN2}, that
\begin{align*}\label{eq:IprodbisB}
 \sum_{k=1}^{n-n_1}\sum_{(j_1,\ldots,j_k)\in
    K_k(n_1)}\prod_{\ell=1}^k\lvert
    B_{n-\sum_{r=1}^{\ell-1}j_r,j_\ell}\left(y,\frac{r}{s}\right)\rvert &\le
    \sum_{k=1}^{n-n_1}\sum_{(j_1,\ldots,j_k)\in
      K_k(n_1)} \prod_{\ell=1}^k
   b_{j_\ell}  \\
 & \ll
  \alpha^{-\lambda n_1} \sum_{k=1}^{n-n_1}\sum_{(j_1,\ldots,j_k)\in
      K_k(n_1)} \left(\prod_{\ell=1}^k
   b_{j_\ell} \right) T_{n-\sum_{r=1}^{k}j_r}\\
&   =  \alpha^{-\lambda n_1} T_n \\
& \ll \alpha^{\lambda(n-n_1)}. \qedhere
\end{align*}
\end{proof}

We mention that, analogously to \eqref{eq:Iprodbis} we get the estimate
\begin{equation}\label{eq:IprodbisB}
  \begin{split}
    \lvert S_{n}(y,\beta)\rvert
    &\leq\sum_{k=1}^{n-n_0}\sum_{(j_1,\ldots,j_k)\in
      K_k(n_0)}\prod_{\ell=1}^k\lvert
    B_{n-\sum_{r=1}^{\ell-1}j_r,j_\ell}(y,\beta)\rvert \cdot
    \lvert S_{n-\sum_{r=1}^{k}j_r}(y,\beta)\rvert
  \end{split}
\end{equation}
for each $D\le n_0<n$.

\subsection{Upper bounds for $m_G$}\label{sec:m}
Let $G$ be a linear recurrence base as in Definition~\ref{def:lrb} and let $\alpha$ be the dominant root of the characteristic polynomial $X^d-a_1X^{d-1}-\cdots - a_{d-1}X - a_d$ of $G$. According to Proposition~\ref{prop:S-1norm} the $1$-norm of $S_n(\cdot,\beta)$ can be easily bounded by $\alpha^{n/2}$ by using Cauchy's inequality followed by Parseval's identity. However, often Proposition~\ref{prop:S-1norm} is of use only if this bound can be sharpened (and the same holds for Proposition~\ref{prop:S'-1norm2}). In particular, in view of \eqref{eq:thetabound} it will turn out that it is desirable to get $m_G+3 \le \alpha^{0.4886061}$, where the quantity $m_G$ is defined in \eqref{eq:ma0}. Such a sharpened bound is needed for instance in the proof of Corollary~\ref{cor1}.  Unfortunately, we are not able to achieve such an improvement for all $G$ satisfying the conditions of Definition~\ref{def:lrb}, however, we can achieve it if the coefficient $a_1$ is large enough. To get the treshhold value for $a_1$ as low as possible we will now study $m_G$ in some detail. Since the dependence of $m_G$ on the linear recurrence base $G$ will be crucial we keep the index $G$ in $m_G$ as well as in $m_G(j)$ (defined in \eqref{eq:ma}) throughout this section.

We start with the following estimate which is related to estimates established in \cite[Section~VI]{fouvry_mauduit1996:methodes_de_crible}.

\begin{lem}\label{lem:mest}
Let $G$ be a linear recurrence base as in Definition~\ref{def:lrb} and let $\alpha$ be the dominant root of the characteristic polynomial $X^d-a_1X^{d-1}-\cdots - a_{d-1}X - a_d$ of $G$. Let $m_G=\max_{j\in \mathcal{I}} m_G(j)$ with $m_G(j)$ as in \eqref{eq:ma}. Then for $a_1 \ge 3$ we have 
\begin{equation}\label{eq:lemmeq}
m_G \le 2 + \frac{2}{a_1 \sin \frac{\pi}{a_1}} - \frac2\pi \log\tan\frac\pi {2a_1}.
\end{equation}
This implies that $m_G  \ll \log a_1 \ll \log \alpha$ for large $a_1$. 
\end{lem}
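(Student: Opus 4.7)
The plan is as follows. Fix $j\in\mathcal{I}$ and split the average $m_G(j)=\frac{1}{a}\sum_{b=0}^{a-1}m_G(j,b)$ into boundary indices ($b=0,\,b=a-1$) and interior indices ($1\le b\le a-2$). By Definition~\ref{def:lrb}(iii) we have $a_j\le a_1=a$, and, as already observed just after \eqref{eq:supn}, the two boundary values equal $a_j$. These therefore contribute at most $2a$ to the sum and at most $2$ to $m_G(j)$.

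For an interior index the trivial bound $|\sin(\pi a_j y)|\le 1$ yields $m_G(j,b)\le \sup_{y\in I_b}1/\sin(\pi y)$, with $I_b=(b/a,(b+1)/a)$. Since $\sin(\pi y)$ is unimodal on $(0,1)$ with peak at $1/2$, this supremum is attained at the endpoint of $I_b$ lying farther from $1/2$. Exploiting the symmetry $\sin(\pi y)=\sin(\pi(1-y))$ to pair $b$ with $a-1-b$ (when $a$ is odd the middle index $b=(a-1)/2$ is self-paired and contributes $1/\cos(\pi/(2a))$ only once) gives, uniformly in parity,
\[
\sum_{b=1}^{a-2}\sup_{y\in I_b}\frac{1}{\sin(\pi y)}\ \le\ 2\sum_{b=1}^{\lfloor(a-1)/2\rfloor}\frac{1}{\sin(\pi b/a)}.
\]

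Next I would isolate the $b=1$ summand, which yields $2/\sin(\pi/a)$, and estimate the rest by an integral. Because $\csc(\pi y)$ is decreasing on $(0,1/2]$, for $b\ge 2$ one has $1/\sin(\pi b/a)\le a\int_{(b-1)/a}^{b/a}\csc(\pi y)\,\mathrm{d}y$; telescoping and using the antiderivative $\int\csc(\pi y)\,\mathrm{d}y=(1/\pi)\log\tan(\pi y/2)$, together with $\tan(\pi y/2)\le 1$ for $y\le 1/2$, gives
\[
\sum_{b=2}^{\lfloor(a-1)/2\rfloor}\frac{1}{\sin(\pi b/a)}\ \le\ -\frac{a}{\pi}\log\tan\frac{\pi}{2a}.
\]
Assembling the boundary contribution, the $b=1$ term, and this integral bound, and dividing by $a$, yields \eqref{eq:lemmeq} after taking the maximum over $j\in\mathcal{I}$.

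For the final asymptotic claim, $a\sin(\pi/a)\to\pi$ and $\tan(\pi/(2a))\sim \pi/(2a)$ as $a\to\infty$, so the right-hand side of \eqref{eq:lemmeq} is $O(\log a)$; the relation $\log a_1\asymp\log\alpha$ is then \eqref{eq:alpharange}. The only delicate step is the pairing argument, which must be organized so that the self-paired middle index in the odd case does not spoil the shape of the bound. Truncating the paired sum at $\lfloor(a-1)/2\rfloor$ handles both parities simultaneously and avoids any case distinction in the final estimate.
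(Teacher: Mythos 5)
Your proof is correct and follows essentially the same route as the paper: boundary indices $b\equiv 0,a-1$ contribute $2$ to the average via $m_G(j,b)=a_j\le a$, the interior terms are bounded by $\sup 1/\sin(\pi y)$, folded by the symmetry $y\mapsto 1-y$, and then compared to the integral of $\csc(\pi y)$ with antiderivative $\tfrac1\pi\log\tan(\pi y/2)$. The only cosmetic differences are that you unify the two parity cases with the floor $\lfloor (a-1)/2\rfloor$ and phrase the sum-to-integral comparison as a telescoping bound, whereas the paper treats odd and even $a$ separately and writes the integral comparison directly.
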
  

\begin{proof}
For convenience we set $a=a_1$. Fix $G$ in a way that $a\ge 3$ and set $I(b)=(\frac ba,\frac{b+1}a)$ for $b\in\mathbb{Z}$. First observe that, since $a_j\le a$ for $j\in\mathcal{I}$,
\[
m_G(j) \le 2 + \frac1a\sum_{b=1}^{a-2}\sup_{y\in I(b)}\frac{1}{\sin \pi y}.
\]
If $a\equiv1\pmod{2}$ we obtain
\begin{equation}\label{eq:evenm1}
m_G(j) \le 2 + \frac2a\sum_{b=1}^{\frac{a-3}2}\sup_{y\in I(b)}\frac{1}{\sin \pi y} + \frac1a \sup_{y\in I((a-1)/2)}\frac{1}{\sin \pi y} \le 2 + \frac2a\sum_{b=1}^{\frac{a-1}2}\sup_{y\in I(b)}\frac{1}{\sin \pi y}
\end{equation}
for each interval $I(b)$ in the rightmost sum the supremum of $\frac{1}{\sin \pi y}$ is located on the left end point of $I(b)$. Thus 
\[
\begin{split}
\sum_{b=1}^{\frac{a-1}2}\sup_{y\in I(b)}\frac{1}{\sin \pi y} &= \sum_{b=1}^{\frac{a-1}2}\frac{1}{\sin  \frac {\pi b}a}  
\\
&\le \frac{1}{\sin  \frac {\pi}a} + \int_{1}^{\frac{a-1}{2}}\frac{dx}{\sin  \frac {\pi x}a}
\\
& = \frac{1}{\sin  \frac {\pi}a} + \frac{a}{\pi} \log\frac{\tan(\frac\pi4 - \frac\pi{4a})}{\tan\frac\pi{2a}}
\\
& \le \frac{1}{\sin  \frac {\pi}a} - \frac{a}{\pi} \log\tan\frac\pi{2a}.
\end{split}
\]
Inserting this in \eqref{eq:evenm1} we arrive at 
\begin{equation}\label{eq:mas-bound}
m_G(j) \le 2 + \frac{2}{a \sin \frac\pi a} - \frac2\pi \log\tan\frac\pi a.
\end{equation}
If $a\equiv0\pmod{2}$ we obtain
\begin{equation}\label{eq:evenm2}
m_G(j)  \le 2 + \frac2a\sum_{b=1}^{\frac a2-1}\sup_{y\in I(b)}\frac{1}{\sin \pi y}.
\end{equation}
Similar to the case of odd $a$ we now gain
\[
\begin{split}
  \sum_{b=1}^{\frac a2-1}\sup_{y\in I(b)}\frac{1}{\sin \pi y} &=
  \sum_{b=1}^{\frac a2-1}\frac{1}{\sin \frac {\pi b}a} 
  \\
  &\le
  \frac{1}{\sin \frac {\pi}a} + \int_{1}^{\frac a2 -1}\frac{dx}{\sin
    \frac {\pi x}a}
  \\
  & = \frac{1}{\sin \frac {\pi}a} + \frac{a}{\pi}
  \log\frac{\tan(\frac\pi4 - \frac\pi{2a})}{\tan\frac\pi{2a}}
  \\
  & \le \frac{1}{\sin \frac {\pi}a} - \frac{a}{\pi}
  \log\tan\frac\pi{2a}.
\end{split}
\]
Inserting this in \eqref{eq:evenm2} we get \eqref{eq:mas-bound} also
in this case. The estimate \eqref{eq:lemmeq} now follows from the
definition of $m_G$.
The asymptotic result is an immediate consequence
of \eqref{eq:lemmeq} since $\tan x \sim x$ for $x\to 0$, and $a\le \alpha$ holds by \eqref{eq:alpharange}.
\end{proof}

The above result immediately implies that
$m_G+3 < \alpha^{0.4886061}$ holds for all $a_1 \ge 72$. By
calculating $m_G$ directly (the suprema have to be approximated
numerically which has been done using {\tt Mathematica}) we get that
this even holds for $a_1 \ge 59$. Thus Proposition~\ref{prop:S-1norm}  and Proposition~\ref{prop:S'-1norm2} immediately imply the
following lemma.

\begin{lem}\label{lem:estallgG}
  Let $(G_j)_{j\ge0}$ be a linear recurrence base satisfying the conditions of Definition~\ref{def:lrb} whose
  characteristic polynomial is given by $X^d-a_1X^{d-1}-\cdots- a_{d-1}X-a_d$
  and has dominant root $\alpha$.  If $a_1 \ge 59$
  then 
  \[
  \int_0^1\lvert S_{n}(y,\beta)\rvert\mathrm{d}y \ll\alpha^{n\eta} \quad\hbox{ and }\qquad  \int_0^1\lvert \frac{\partial S_{n}}{\partial y}(y,\beta)\rvert\mathrm{d}y \ll \alpha^{n(1+\eta)}
  \] 
  hold for some explicitly computable
  $\eta < 0.4886061$.
\end{lem}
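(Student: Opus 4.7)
The strategy is to feed the information provided by Lemma~\ref{lem:mest} into Proposition~\ref{prop:S-1norm} and Proposition~\ref{prop:S'-1norm2}. Those propositions give
\[
\int_0^1 \lvert S_n(y,\beta)\rvert\, \mathrm{d}y \ll (m_G+3)^n
\quad\text{and}\quad
\int_0^1 \Big\lvert \frac{\partial S_n}{\partial y}(y,\beta) \Big\rvert\, \mathrm{d}y \ll \alpha^n (m_G+3)^n,
\]
so setting $\eta := \log_\alpha(m_G+3)$ reduces the lemma to the single inequality $m_G+3 < \alpha^{0.4886061}$ for every linear recurrence base $G$ with $a_1\ge 59$.

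For the analytic range $a_1\ge 72$, I would use the explicit upper bound
\[
m_G \le 2 + \frac{2}{a_1\sin(\pi/a_1)} - \frac{2}{\pi}\log\tan\frac{\pi}{2a_1}
\]
from Lemma~\ref{lem:mest} together with $\alpha \ge a_1$ from \eqref{eq:alpharange}. Define $\Phi(a) = 5 + \tfrac{2}{a\sin(\pi/a)} - \tfrac{2}{\pi}\log\tan\frac{\pi}{2a}$ and $\Psi(a) = a^{0.4886061}$. It suffices to check that $\Phi(a)<\Psi(a)$ for all integers $a\ge 72$. A direct substitution shows this at $a=72$, and an elementary analytic comparison of growth rates (the right-hand side grows like a positive power of $a$ while the left-hand side grows only like $\log a$) establishes $\Phi(a)/\Psi(a)\to 0$ and monotonicity for $a$ large enough, so the inequality persists for all $a\ge 72$.

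For the residual range $59\le a_1\le 71$ the analytic bound of Lemma~\ref{lem:mest} is too weak, and I would instead compute $m_G$ directly. For each fixed $a_1$ in this finite range and each possible value $a_j\in\{1,\ldots,a_1\}$ (these are the only parameters on which $m_G(j)$ depends, since the interval partition has width $1/a_1$), the quantity
\[
m_G(j) = \frac{1}{a_1}\sum_{b=0}^{a_1-1} \sup_{y\in(b/a_1,(b+1)/a_1)} \Big\lvert\frac{\sin\pi a_j y}{\sin \pi y}\Big\rvert
\]
is obtained by a numerical search inside each interval (each supremum is attained either at an endpoint or at a unique interior critical point of a smooth function). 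Taking $m_G$ as the maximum over $a_j\le a_1$ and checking $m_G+3<a_1^{0.4886061}\le \alpha^{0.4886061}$ gives the required bound across this finite range. This is a routine but non-trivial computation and is the only part of the proof that is genuinely computer-assisted; carrying it out in \texttt{Mathematica} (as indicated in the text) yields the asserted threshold $a_1\ge 59$.

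Combining the two ranges produces an explicitly computable $\eta<0.4886061$, and plugging this into the two propositions of Section~\ref{sec:1n} gives both $1$-norm estimates. The main obstacle is purely the numerical verification in $59\le a_1\le 71$; all other steps are either immediate consequences of earlier results or an elementary single-variable calculus check.
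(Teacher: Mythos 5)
Your proposal is correct and follows essentially the same route as the paper: the authors likewise combine the analytic bound of Lemma~\ref{lem:mest} (which settles $m_G+3<\alpha^{0.4886061}$ for $a_1\ge 72$) with a direct numerical computation of $m_G$ in \texttt{Mathematica} for the remaining range down to $a_1=59$, and then invoke Propositions~\ref{prop:S-1norm} and~\ref{prop:S'-1norm2} with $\eta=\log_\alpha(m_G+3)$. No substantive differences.
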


For the special family $(G_j)_{j\ge 0}$ with $G_{j+2}=aG_{j+1}+G_j$ we
use Remark~\ref{rem:better} to get this result for even smaller values of  $a_1$. Indeed, if
$a \ge 40$ we may choose $r=2$ in this remark and, again using {\tt
  Mathematica}, we can calculate the quantity $m_G^{(2)}$ defined in \eqref{eq:m(r)} for $40 \le a \le
58$. Since $m_G^{(2)}+2 < \alpha^{0.4886061}$ holds for all $a \ge 40$, the
estimate in \eqref{prop1improved} yields the following result.

\begin{lem}\label{lem:estGspecial}
Let $(G_j)_{j\ge0}$ be a linear recurrence base whose characteristic polynomial is given by $X^2-aX-1$ and has dominant root $\alpha$. If $a \ge 40$ then
 \[
  \int_0^1\lvert S_{n}(y,\beta)\rvert\mathrm{d}y \ll\alpha^{n\eta} \quad\hbox{ and }\qquad  \int_0^1\lvert \frac{\partial S_{n}}{\partial y}(y,\beta)\rvert\mathrm{d}y \ll \alpha^{n(1+\eta)}
  \] 
  hold for some explicitly computable $\eta < 0.4886061$. 
\end{lem}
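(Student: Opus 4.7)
My plan is to split according to the size of $a$. For $a\ge 59$ the hypothesis $a_1\ge 59$ of Lemma~\ref{lem:estallgG} is satisfied and the conclusion is immediate; I therefore focus on the range $40\le a\le 58$, where the strategy is to invoke the sharpened integral estimate \eqref{prop1improved} of Remark~\ref{rem:better} with $r=2$.

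To justify taking $r=2$, I would first show that for the characteristic polynomial $X^2-aX-1$ one has $\alpha=(a+\sqrt{a^2+4})/2\in(a,a+1)$, so $\lfloor\alpha\rfloor=a$ and
\[
\lfloor\alpha\rfloor+1-\alpha \;=\; 1-\tfrac{1}{\alpha} \;>\; \tfrac{1}{2}
\]
whenever $\alpha>2$, which certainly holds for $a\ge 40$. Hence $\varepsilon>0$ can be chosen so small that the quantity $u=\lfloor\alpha\rfloor+1-\alpha-\varepsilon$ in Remark~\ref{rem:better} still exceeds $\tfrac12$, and then $r=2$ is the smallest positive integer with $r^{-1}<u$. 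With this choice, \eqref{prop1improved} yields
\[
\int_0^1|S_n(y,\beta)|\,\mathrm{d}y \ll (m_G^{(2)}+2)^n \quad\hbox{and}\quad \int_0^1\Big|\frac{\partial S_n}{\partial y}(y,\beta)\Big|\,\mathrm{d}y \ll \alpha^n(m_G^{(2)}+2)^n,
\]
so the whole task reduces to checking $m_G^{(2)}+2 < \alpha^{0.4886061}$ for each $a\in\{40,\dots,58\}$.

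To carry out this verification I would exploit that the non-vanishing coefficient indices are $\mathcal{I}=\{1,2\}$ with $a_2=1$, so $|\sin(\pi a_2 y)/\sin(\pi y)|\equiv 1$ gives $m_G^{(t)}(2)=1$ for every shift $t$. The only nontrivial quantities are therefore $m_G^{(0)}(1)$ and $m_G^{(1/2)}(1)$, each the average of $a$ suprema of the Dirichlet-type kernel $|\sin(\pi ay)/\sin(\pi y)|$ over short intervals; these suprema can be evaluated in \texttt{Mathematica} by isolating the unique interior critical point (solving $a\cot(\pi a y)=\cot(\pi y)$) on each subinterval. The main obstacle is purely computational: one must certify with enough precision that $m_G^{(2)}+2 < \alpha^{0.4886061}$ for every one of the $19$ values of $a$ in the range, and then set $\eta$ to be the largest value of $\log_\alpha(m_G^{(2)}+2)$ encountered (together with the exponent produced by Lemma~\ref{lem:estallgG} for $a\ge 59$), which is then explicitly computable and strictly below $0.4886061$. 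Heuristically the inequality should hold with considerable room, since the analogue of Lemma~\ref{lem:mest} adapted to shifted partitions still gives $m_G^{(2)}\ll\log a$, comfortably dominated by $\alpha^{0.4886061}\sim a^{0.4886061}$ for the values of $a$ in question.
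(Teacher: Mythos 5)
Your proposal follows essentially the same route as the paper: invoke Remark~\ref{rem:better} with $r=2$ (which the paper also does, and which you additionally justify via $\lfloor\alpha\rfloor+1-\alpha=1-\alpha^{-1}>\tfrac12$), reduce to the numerical verification of $m_G^{(2)}+2<\alpha^{0.4886061}$ for $40\le a\le 58$ via \eqref{prop1improved}, and cover $a\ge 59$ by Lemma~\ref{lem:estallgG}. The observation that $m_G^{(t)}(2)=1$ since $a_2=1$, so that only the shifted averages of $\lvert\sin(\pi a y)/\sin(\pi y)\rvert$ need to be computed, is correct and consistent with the paper's computation.
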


\section{Estimates of the $1$-norm for smaller values of $a_1$}\label{sec:smaller-traces}

\subsection{Blocking}
As mentioned at the beginning of Section~\ref{sec:m}, in order to derive results on almost primes we need to get good bounds for the $1$-norm of $S_n(\cdot,\beta)$ and of its derivative. To obtain such good estimates also for smaller coefficients $a_1$, instead of taking suprema after each step of the recurrence, we deal with ``blocks'' or ``windows'' of ``width'' $w$ and take the suprema after each $w$-th iteration. To keep things as simple as possible we only do this for recurrences having characteristic polynomial $X^2-aX-1$ for some $a\ge 1$ (it should then be clear how to treat the general case). Thus in the present section $G=(G_n)$ is defined by
\begin{equation}\label{eq:specialrec}
G_{n+2}=aG_{n+1} + G_n \qquad (n\ge 0)
\end{equation}
with $G_0=1$ and $G_1\ge a+1$. In this case \eqref{eq:Srec} becomes
\begin{equation}\label{eq:recspecialcase}
S_n(y,\beta) = A_{n,1}(y,\beta)S_{n-1}(y,\beta) + A_{n,2}(y,\beta)S_{n-2}(y,\beta) \qquad(n\ge 2).
\end{equation}
Now set $A^{(1)}_{n,j}(y,\beta)=A_{n,j}(y,\beta)$ for $j\in\{1,2\}$ and recursively define
\[
\begin{split}
A_{n,\ell}^{(\ell)}(y,\beta) &= A_{n,\ell-1}^{(\ell-1)}(y,\beta)\cdot A_{n-\ell+1,1}(y,\beta)+A_{n,\ell}^{(\ell-1)}(y,\beta)\quad\hbox{and}\quad \\
A_{n,\ell+1}^{(\ell)}(y,\beta)&=A_{n,\ell-1}^{(\ell-1)}(y,\beta)\cdot A_{n-\ell+1,2}(y,\beta).
\end{split}
\]
If we iterate \eqref{eq:recspecialcase} appropriately we obtain
\begin{equation}\label{eq:Swindow}
S_n(y,\beta) = A^{(w)}_{n,w}(y,\beta)S_{n-w}(y,\beta) + A^{(w)}_{n,w+1}(y,\beta)S_{n-w-1}(y,\beta).
\end{equation}
Setting
\begin{equation}\label{eq:Jw}
J^{(w)}_{k}=\left\{ (j_1,\ldots,j_k) \in \{w,w+1\}^k \;:\; n-\sum_{\ell=1}^{k-1}j_\ell > w+1 \ge  n-\sum_{\ell=1}^{k}j_\ell \right\} \qquad \big(1\le k \le \lfloor n/w \rfloor\big)
\end{equation}
and iterating \eqref{eq:Swindow} we find in the same way as in the proof of Proposition\eqref{prop:S-1norm} that
\[
|S_n(y,\beta)| \ll \sum_{k=1}^{\lfloor n/w \rfloor} \sum_{(j_1,\ldots, j_k) \in J_k^{(w)}} \prod_{\ell=1}^k\left| A^{(w)}_{n-\sum_{r=1}^{\ell-1}j_r,j_\ell}(y,\beta)\right|.
\]
The functions $A_{n,j}^{(w)}(y,\beta)$ are exponential sums containing linear combinations of $G_{n-1},\ldots,G_{n-w-2}$ in the exponents. 
Moreover, their definition implies that $|A_{n,j}^{(w)}(y,\beta)|\le \alpha^j$ for $j\in\{w,w+1\}$. Thus as in the proof of Proposition~\ref{prop:S'-1norm2} we get
\begin{align*}
\bigg\vert
\frac{\partial S_{n}}{\partial y}(y,\beta)
\bigg\vert
&\ll
    \sum_{k=1}^{\lfloor n/w \rfloor}\sum_{(j_1,\ldots, j_k) \in J_k^{(w)}}\sum_{1\leq i\leq
      k}G_{n-\sum_{r=1}^ij_r}\prod_{\substack{\ell=1\\ \ell\neq
        i}}^k\lvert
    A_{n-\sum_{r=1}^{\ell-1}j_r,j_\ell}\left(y,\beta\right)\rvert \\
&\ll\alpha^n \sum_{k=1}^{\lfloor n/w \rfloor}\sum_{(j_1,\ldots, j_k) \in J_k^{(w)}}\sum_{1\leq i\leq
      k}\prod_{{\ell=i+1}}^k\lvert
    A_{n-\sum_{r=1}^{\ell-1}j_r,j_\ell}\left(y,\beta\right)\rvert.
 \end{align*}
Since for the recurrences in \eqref{eq:specialrec} the asymptotic estimate \eqref{eq:recasympt} can be strengthened to $G_\ell=c\alpha^\ell + \mathcal{O}(\alpha^{-\ell})$ for some $c>0$ we replace $G_\ell$ by $c\alpha^\ell$ in $A_{n,j}^{(w)}(y,\beta)$ and  call the resulting expression $\tilde A_{n,j}^{(w)}(y,\beta)$. Then $|\tilde A_{k,j}^{(w)}(y,\beta)- A_{k,j}^{(w)}(y,\beta)| \ll \alpha^{-k}$ for $j\in\{w,w+1\}$. Thus for each $\delta >0$ we have
\[
|S_n(y,\beta)| \ll \sum_{k=1}^{\lfloor n/w \rfloor} \sum_{(j_1,\ldots, j_k) \in J_k^{(w)}} \prod_{\ell=1}^k\left(\left| \tilde A^{(w)}_{n-\sum_{r=1}^{\ell-1}j_r,j_\ell}(y,\beta) \right| + \delta\right),
\]
where the implied constant depends on $\delta$. Obviously, an analogous estimate holds {\it mutatis mutandis} for $\vert
\frac{\partial S_{n}}{\partial y}(y,\beta)\vert$. Instead of the intervals $I_k(b)$ used in Section~\ref{sec:estimates} we now use the intervals
\[
I'_k(b)=\left[\frac{b-a\beta}{ac\alpha^k},\frac{b+1-a\beta}{ac\alpha^k}\right) \qquad(0\leq b<\lfloor ac\alpha^k \rfloor).
\]
(For large $k$, the intervals $I_k(b)$ and $I'_k(b)$ are almost the same.) Now we define
\begin{equation}\label{eq:Mwjb}
M_w(j,b) = \sup_{y\in I'_{n-1}(b)} \left|\tilde A^{(w)}_{n,j}(y,\beta)\right| + \delta \qquad(j\in\{w,w+1\},\, b\in \Z)
\end{equation}
and note that $M_w(j,b)$ does not depend on $n$. Indeed, by \eqref{eq:A} and the definition of $\tilde A^{(w)}_{n,j}(y,\beta)$ the variable $n$ in $\tilde A^{(w)}_{n,j}(y,\beta)$ occurs only in linear combinations of terms of the form $\alpha^{n-k}y$ for some $k$ depending only on $a_j$ and $w$. Thus $n$ cancels out if we insert the bounds of the interval $I'_{n-1}(b)$ for $y$. However, contrary to $m(j,b)$, the function $M_w(j,b)$ is in general \emph{not} periodic in $b$ (also note that, contrary to the definition of $m(j,b)$ we use $n-1$ instead of $n-j$ as index of $I'_{n-1}(b)$; this is because we want to split in \emph{finer} subintervals in each step than we did in Section~\ref{sec:1n}). Setting
\[
F_{n,j}^{(w)}(y) = M_w(j,b) \quad\hbox{for}\quad y\in I'_{n-1}(b)
\]
and integrating we gain
\begin{equation}\label{eq:zws}
\int_0^1|S_n(y,\beta)|\mathrm{d}y \ll \sum_{k=1}^{\lfloor n/w \rfloor} \sum_{(j_1,\ldots, j_k) \in J_k^{(w)}} \int_0^1\prod_{\ell=1}^k F^{(w)}_{n-\sum_{r=1}^{\ell-1}j_r,j_\ell}(y)\mathrm{d}y
\end{equation}
and
\begin{equation}\label{eq:zws2}
\int_0^1 \bigg\vert
\frac{\partial S_{n}}{\partial y}(y,\beta)
\bigg\vert
\ll \alpha^n\sum_{k=1}^{\lfloor n/w \rfloor}\sum_{(j_1,\ldots, j_k) \in J_k^{(w)}}\sum_{1\leq i\leq
      k}
    \int_0^1\prod_{\ell=i+1}^k F^{(w)}_{n-\sum_{r=1}^{\ell-1}j_r,j_\ell}(y)\mathrm{d}y.
\end{equation}
Writing
\begin{equation}\label{eq:nelliell}
n_\ell = n-\sum_{r=1}^{k-\ell}j_r \quad\hbox{for } 0\le \ell\le k \quad \hbox{and}\quad i_\ell = n_\ell-n_{\ell-1} = j_{k+1-\ell}\quad\hbox{for $1\le \ell\le k$}
\end{equation}
we now consider the integrals $\int_0^1 \prod_{\ell=1}^k F^{(w)}_{n_\ell, i_\ell}(y) \mathrm{d}y$ in \eqref{eq:zws} and $\int_0^1 \prod_{\ell=1}^{k-i} F^{(w)}_{n_\ell, i_\ell}(y) \mathrm{d}y$ in \eqref{eq:zws2}.
From the
  definition of the intervals $I'_k(b)$ it is clear that each interval
  of the form $I'_{n_{\ell-1}}(b)$ can be covered by $\lfloor\alpha^{i_\ell}\rfloor + 2$
  adjacent intervals of the form $I_{n_{\ell}}(b')$. To be more precise, there is $c\in\N$ such that
  \begin{equation}\label{eq:istrich}
      I'_{n_{\ell-1}}(b) \subset I'_{n_\ell}(c)\cup I'_{n_\ell}(c+1) \cup \dots \cup I'_{n_\ell}(c+\lfloor\alpha^{i_\ell}\rfloor +1).
  \end{equation}
We can now argue in a similar way as in \eqref{eq:JJ} to gain 
(the functions $c_1,\ldots, c_{k-1}$ are chosen in accordance with \eqref{eq:istrich})
\begin{equation}\label{eq:m2spl}
\begin{split}
\int_0^1 \prod_{\ell=1}^k &F^{(w)}_{n_\ell, i_\ell}(y) \mathrm{d}y 
= \int_0^1 F^{(w)}_{n_1, i_1}(y)\prod_{\ell=2}^k F^{(w)}_{n_\ell, i_\ell}(y) \mathrm{d}y  \\
&\ll\sum_{b_0=0}^{\lfloor ac\alpha^{n_1-1}\rfloor}M_w({i_1},b_0)  \int_{I'_{n_1-1}(b_0)} \prod_{\ell=2}^k F^{(w)}_{n_\ell, i_\ell}(y) \mathrm{d}y\\
&\ll\sum_{b_0=0}^{\lfloor ac\alpha^{n_1-1}\rfloor}M_w({i_1},b_0)\sum_{b_1=0}^{\lfloor\alpha^{i_2}\rfloor+1}M_w(i_2,c_1(b_0)+b_1)\int_{I'_{n_2-1}(c_1(b_0)+b_1)}\prod_{\ell=3}^k F^{(w)}_{n_\ell, i_\ell}(y) \mathrm{d}y\\
&\ll \frac{1}{ac\alpha^{n-1}} \sum_{b_0=0}^{\lfloor ac\alpha^{n_1-1}\rfloor}M_w({i_1},b_0) \times \\
&\qquad
\sum_{b_1=0}^{\lfloor\alpha^{i_2}\rfloor+1}M_w(i_2,c_1(b_0)+b_1)
\cdots
\sum_{b_{k-1}=0}^{\lfloor\alpha^{i_k}\rfloor+1}M_w(i_{k},c_{k-1}(b_0,\ldots, b_{k-2})+b_{k-1}).
\end{split}
\end{equation}
Let
\begin{equation}\label{eq:Mwrdef}
M_w(r) = \sup_{q\in\mathbb{Z}} \sum_{b=0}^{\lfloor\alpha^r \rfloor +1} M_w(r,b+q) \qquad(r\in\{w,w+1\}).
\end{equation}
According to \eqref{eq:nelliell} we have $i_\ell=n_\ell -n_{\ell-1} \in \{w,w+1\}$. Thus, if 
\[
s=s(j_1,\ldots, j_k)=\#\{1\le \ell \le k \,:\, i_\ell = w\}=\#\{1\le \ell \le k \,:\, j_\ell = w\},
\] 
we have
\begin{equation}\label{eq:zzzw}
\int_0^1 \prod_{\ell=1}^k F^{(w)}_{n_\ell, i_\ell}(y) \mathrm{d}y \ll \frac{1}{\alpha^n} M_w(w)^sM_w(w+1)^{k-s}.
\end{equation}
If we iterate only $k-i-1$ times (instead of $k-1$ times) in \eqref{eq:m2spl} we get 
\begin{equation}\label{eq:zzzw2}
\begin{split}
\int_0^1 \prod_{\ell=1}^{k-i} F^{(w)}_{n_\ell, i_\ell}(y) \mathrm{d}y 
&\ll \frac{1}{\alpha^n} M_w(w)^{s(i)}M_w(w+1)^{k-i-s} \\
&\ll \frac{1}{\alpha^n} \max\{M_w(w),1\}^s\max\{M_w(w+1),1\}^{k-s}
\end{split}
\end{equation}
with $s(i)=\#\{1\le \ell \le k-i \,:\, i_\ell  = w\}$.

By the definition of $J_k^{(w)}$ in \eqref{eq:Jw} the sum $ws(j_1,\ldots, j_k) + (w+1)(k-s(j_1,\ldots, j_k))$ must be close to $n$ to make sure that $(j_1,\ldots, j_k) \in J_k^{(w)}$ holds. Using this fact and inserting \eqref{eq:zzzw} in \eqref{eq:zws} we finally gain
\begin{equation}\label{eq:intes}
\begin{split}
\int_0^1|S_n(y,\beta)|\mathrm{d}y &\ll \frac{1}{\alpha^n} \sum_{k=1}^{\lfloor n/w \rfloor} \sum_{(j_1,\ldots, j_k) \in J_k^{(w)}} M_w(w)^{s(j_1,\ldots, j_k)}M_w(w+1)^{k-s(j_1,\ldots, j_k)} \\
&\ll \frac{1}{\alpha^n} \sum_{s=1}^{\lfloor n/w \rfloor} \binom {{\lfloor n/w \rfloor}}s M_w(w)^sM_w(w+1)^{(n-ws)/(w+1)}\\
&= \frac{1}{\alpha^n} \sum_{s=1}^{\lfloor n/w \rfloor} \binom {{\lfloor n/w \rfloor}}s M_w(w)^s(M_w(w+1)^{w/(w+1)})^{{n}/{w} - s}\\
&\ll \frac{1}{\alpha^n} \big(M_w(w)+M_w(w+1)^{w/(w+1)}\big)^{n/w}.
\end{split}
\end{equation}
Since $k\le n$ inserting \eqref{eq:zzzw2} in \eqref{eq:zws2} in an analogous way we gain 
\begin{equation}\label{eq:newAbl}
\int_0^1\lvert \frac{\partial S_{n}}{\partial y}(y,\beta)\rvert\mathrm{d}y \ll n \big(\max\{M_w(w),1\}+\max\{M_w(w+1),1\}^{w/(w+1)}\big)^{n/w}.
\end{equation}
As mentioned in Section~\ref{sec:estimates} we want to get
\begin{equation}\label{eq:whatwewant}
  \int_0^1\lvert S_{n}(y,\beta)\rvert\mathrm{d}y \ll\alpha^{n\eta} \quad\hbox{ and }\qquad  \int_0^1\lvert \frac{\partial S_{n}}{\partial y}(y,\beta)\rvert\mathrm{d}y \ll \alpha^{n(1+\eta)}
\end{equation}
for some $\eta \le 0.4886061$. Thus in view of \eqref{eq:intes} and \eqref{eq:newAbl} we are left with finding bounds for the suprema $M_w(w)$ and $M_w(w+1)$ that imply
\begin{equation}\label{eq:MwMw+1}
\max\{M_w(w),1\}+\max\{M_w(w+1),1\}^{w/(w+1)} < \alpha^{1.4886061\cdot w}.
\end{equation}
We need \eqref{eq:whatwewant} for all $\beta\in[0,1)$ to get our results for arbitrary modules.

\subsection{Blocks of width two}
In this section we derive the estimate \eqref{eq:whatwewant} for the recurrences \eqref{eq:specialrec} with $15\le a\le 39$ by setting $w=2$ for the width of the block. Indeed, if we take $w=2$ we get from \eqref{eq:Swindow} that
\begin{equation*}
S_n(y,\beta) = A^{(2)}_{n,2}(y,\beta)S_{n-2}(y,\beta) + A^{(2)}_{n,3}(y,\beta)S_{n-3}(y,\beta).
\end{equation*}
with
\[
A^{(2)}_{n,2}(y,\beta) = A_{n,1}(y,\beta)A_{n-1,1}(y,\beta)+A_{n,2}(y,\beta) \quad\hbox{and}\quad A^{(2)}_{n,3}(y,\beta) = A_{n,1}(y,\beta)A_{n-1,2}(y,\beta).
\]
Inserting $w=2$ in \eqref{eq:Mwjb} yields therefore
\[
\begin{split}
M_2(2,b)-\delta&=\sup_{y\in I'_{n-1}(b)}|\tilde A_{n,1}(y,\beta) \tilde A_{n-1,1}(y,\beta)+\tilde A_{n,2}(y,\beta)| \\
&\le  \sup_{y\in I'_{n-1}(b)} \left|
\frac{\sin \pi a(\beta + c\alpha^{n-1}y)}{\sin \pi (\beta + c\alpha^{n-1}y)} 
\frac{\sin \pi a(\beta + c\alpha^{n-2}y)}{\sin \pi (\beta + c\alpha^{n-2}y)} 
\right| + 1\\
&=\sup_{y\in \left(\frac ba,\frac{b+1}a\right)}
\left|
\frac{\sin \pi a y}{\sin \pi y} 
\frac{\sin \pi a( \beta(1-\alpha^{-1}) + \alpha^{-1}y )}{\sin \pi ( \beta(1-\alpha^{-1})+ \alpha^{-1}y )} 
\right| + 1\\
\end{split}
\]
and
\[
\begin{split}
M_2(3,b)-\delta=\sup_{y\in I_{n-1}(b)}|\tilde A_{n,1}(y,\beta) \tilde A_{n-1,2}(y,\beta)| 
= \sup_{y\in \left(\frac ba,\frac{b+1}a\right)}
\left|
\frac{\sin \pi a y}{\sin \pi y}
\right|. \\
\end{split}
\]
Thus, setting $\delta'=(\lfloor \alpha^2 \rfloor +2) \delta$ we obtain from \eqref{eq:Mwrdef} that
\[
\begin{split}
M_2(2)-\delta' &= \sup_{q\in\mathbb{Z}} \sum_{b=0}^{\lfloor\alpha^2 \rfloor +1} (M_2(2,b+q) - \delta)\\
&\le \sup_{q\in\mathbb{Z}} \sum_{b=0}^{\lfloor\alpha^2 \rfloor +1} 
\bigg( 
\sup_{y\in \left(\frac {b+q}a,\frac{b+q+1}a\right)}
\left|
\frac{\sin \pi a y}{\sin \pi y} 
\frac{\sin \pi a( \beta(1-\alpha^{-1}) + \alpha^{-1}y )}{\sin \pi ( \beta(1-\alpha^{-1})+ \alpha^{-1}y )} 
\right| + 1 
\bigg).
\end{split}
\]
Since $\{1,\alpha^{-1}\}$ are rationally independent over $\Q$ we can omit the offset in the arguments of the sine functions in the last quotient without changing the supremum over $\Z$. This yields that
\begin{align*}
M_2(2)-\delta' &\le \sup_{q\in\mathbb{Z}} \sum_{b=0}^{\lfloor\alpha^2 \rfloor +1} \bigg( \sup_{y\in \left(\frac {b+q}a,\frac{b+q+1}a\right)}
\left|
\frac{\sin \pi a y}{\sin \pi y} 
\frac{\sin \pi a\alpha^{-1}y}{\sin \pi \alpha^{-1}y} 
\right| + 1 \bigg)
\\
&
=\sup_{q\in\mathbb{Z}} \sum_{b=0}^{\lfloor\alpha^2 \rfloor +1} \bigg( \sup_{y\in \left(\frac {b}a,\frac{b+1}a\right)}
\left|
\frac{\sin \pi a y}{\sin \pi (y+\frac qa)} 
\frac{\sin \pi a(\alpha^{-1}y+\frac {q\alpha^{-1}}a)}{\sin \pi (\alpha^{-1} y+\frac {q\alpha^{-1}}a)} 
\right| + 1 \bigg),
\end{align*}
which is an estimate that is uniform in $\beta\in[0,1)$. Now we again use the rational independence of $\{1,\alpha^{-1}\}$ and the fact that $|\sin \pi (y+\frac qa)|$ is periodic in $q\in \mathbb{Z}$ with period $a$ to gain (setting $\gamma= \{q\alpha^{-1}/a\}$) that 
\begin{equation}\label{eq:w2M}
M_2(2)-\delta' \le \lfloor\alpha^2 \rfloor +1 + \max_{q\in\{0,\ldots,a-1\}} \sup_{\gamma\in[0,1)}
\sum_{b=0}^{\lfloor\alpha^2 \rfloor +1}  \sup_{y\in \left(\frac {b}a,\frac{b+1}a\right)}
\left|
\frac{\sin \pi a y}{\sin \pi (y+\frac qa)} 
\frac{\sin \pi a(\alpha^{-1}y+\gamma)}{\sin \pi (\alpha^{-1} y+\gamma)} 
\right|.
\end{equation}
We have to derive upper bounds for the right hand side. Set 
\[
g(x) = \frac{\sin \pi a x}{\sin \pi x} 
\qquad
\hbox{and}
\qquad
h(y, \gamma,q) = g\Big(y+\frac qa\Big) g(\alpha^{-1} y+\gamma). 
\]
Then, Taylor expansion yields 
\[
  |h(y, \gamma,q)| \le |h(y_0,\gamma_0,q)| +
  \frac\varepsilon2\max_{(y',\gamma')\in
    J(\varepsilon,\eta)}|h_y(y',\gamma',q)| + \frac\eta2
  \max_{(y',\gamma')\in J(\varepsilon,\eta)}|h_\gamma(y',\gamma',q)|
\]
for
$(y,\gamma)\in J(\varepsilon,\eta):=(y_0-\varepsilon/2,
y_0+\varepsilon/2)\times(\gamma_0-\eta/2, \gamma_0+\eta/2)$; note that
these intervals depend on $y_0$ and $\gamma_0$. We now want to
estimate the derivatives. By the product rule,
\begin{align*}
|h_y(y, \gamma,q)| &\le  \Big|g'\Big(y+\frac qa\Big) g(\alpha^{-1} y+\gamma)\Big| + \alpha^{-1} \Big|g\Big(y+\frac qa\Big)g'(\alpha^{-1} y+\gamma)\Big|,
\\
|h_\gamma(y, \gamma,q)| &= \Big|g\Big(y+\frac qa\Big)g'(\alpha^{-1} y+\gamma)\Big|.
\end{align*}
Now, $|g(x)|\le a$ and by expanding $g$ in an exponential series we get
\[
|g'(x)| = \left| \sum_{j=0}^{a-1}2\pi \sqrt{-1} j e(j x)  \right| \le \pi a(a-1).
\]
Inserting this in \eqref{eq:w2M} yields that for each $\varepsilon, \eta > 0$
\[
\begin{split}
M_2(2) - \delta'& \le \lfloor\alpha^2 \rfloor +1  \\
&+\max_{q\in\{0,\ldots,a-1\}} 
\max_{\gamma_0\in \{\ell \eta:\ell\in\mathbb{N}\} \cap [0,1+\frac\eta2)}
\sum_{b=0}^{\lfloor\alpha^2 \rfloor +1}   \max_{y_0\in \{\ell \varepsilon:\ell\in\mathbb{N}\}  \cap \left(\frac {b}a
-\frac\varepsilon2 
,\frac{b+1}a+\frac\varepsilon2\right)}
\\
&\hskip 1cm 
\Big\{|h(y_0,\gamma_0,q)| + \frac\varepsilon2\max_{(y',\gamma')\in J(\varepsilon,\eta)}|h_y(y',\gamma',q)| + \frac\eta2 \max_{(y',\gamma')\in J(\varepsilon,\eta)}|h_\gamma(y',\gamma',q)|\Big\},
\end{split}
\]
and thus, again for each $\varepsilon, \eta > 0$,
\begin{equation}\label{eq:forComputer}
\begin{split}
M_2(2)-\delta' \le &
\lfloor\alpha^2 \rfloor +1 \\
&+ \max_{q\in\{0,\ldots,a-1\}} 
\max_{\gamma_0\in \{\ell \eta:\ell\in\mathbb{N}\} \cap [0,1+\frac\eta2)}
\sum_{b=0}^{\lfloor\alpha^2 \rfloor +1}  
\max_{y_0\in \{\ell \varepsilon:\ell\in\mathbb{N}\}  \cap \left(\frac {b}a-\frac\varepsilon2,\frac{b+1}a+\frac\varepsilon2\right)} |h(y_0,\gamma_0,q)|\\
&+
\varepsilon a \max_{q\in\{0,\ldots,a-1\}} 
\sum_{b=0}^{\lfloor\alpha^2 \rfloor +1} 
\sup_{y\in \left(\frac {b}a-\frac\varepsilon2,\frac{b+1}a+\frac\varepsilon2\right)}
\Big|g'\Big(y+\frac qa\Big)\Big|\\
&+
\varepsilon\alpha^{-1} \pi a(a-1) \max_{q\in\{0,\ldots,a-1\}} 
\sum_{b=0}^{\lfloor\alpha^2 \rfloor +1} 
\sup_{y\in \left(\frac {b}a-\frac\varepsilon2,\frac{b+1}a+\frac\varepsilon2\right)}
\Big|g\Big(y+\frac qa\Big)\Big|\\
&+
\eta \pi a(a-1) \max_{q\in\{0,\ldots,a-1\}} 
\sum_{b=0}^{\lfloor\alpha^2 \rfloor +1} 
\sup_{y\in \left(\frac {b}a-\frac\varepsilon2,\frac{b+1}a+\frac\varepsilon2\right)}
\Big|g\Big(y+\frac qa\Big)\Big|.
\end{split}
\end{equation}
The estimation of $M_2(3)$ is much easier. By periodicity we have
\begin{equation}\label{eq:fc2}
M_2(3)- (\lfloor \alpha^3 \rfloor +2) \delta = \sup_{q\in\mathbb{Z}} \sum_{b=0}^{\lfloor\alpha^3 \rfloor +1} (M_2(3,b+q) - \delta)\le   \max_{q\in\{0,\ldots,a-1\}} \sum_{b=0}^{\lfloor\alpha^3 \rfloor +1}
\sup_{y\in \left(\frac ba,\frac{b+1}a\right)}
\Big|g\Big(y+\frac qa\Big)\Big|.
\end{equation}

Treating the estimates \eqref{eq:forComputer} and \eqref{eq:fc2} with {\tt Mathematica} (accelerated by a {\tt C} program for the calculation of the ``main term'' in the first line of \eqref{eq:forComputer}) and choosing $\delta=10^{-10}$ led to the results displayed in Table~\ref{tab:w2}. This is used to prove the following lemma.

\begin{table}
\begin{tabular}{||c|c|c|c|c|c||}
\hline 
\hline 
$a$ & $\varepsilon$ & $\eta$ & Upper bound for $M_2$ & Power of $\alpha$ & $\alpha^{3}$\\
\hline
\hline 
$39$ & 0.005 & 0.0005 &46695.7 & 2.93416 & 59436 \\ 
\hline 
$38$ & 0.005 & 0.0005 &43255.2 & 2.93405 & 54986 \\ 
\hline 
$37$ & 0.005 & 0.0005 & 39994.9& 2.93398 & 50764 \\ 
\hline 
$36$ & 0.005 & 0.0005 & 36989.9& 2.93458 & 46764 \\ 
\hline 
$35$ & 0.005 & 0.0008 &39595.4 & 2.97694 & 42980 \\ 
\hline 
$34$ & 0.005 & 0.0008 &36279.6 & 2.97656 & 39406 \\ 
\hline 
$33$ & 0.005 & 0.0008 & 33182.6 & 2.97641& 36036 \\ 
\hline 
$32$ & 0.005 & 0.0008&30243.8 &2.97603 & 32864 \\ 
\hline
$31$ & 0.005 & 0.0008 & 27544.8 &2.97627 & 29884  \\ 
\hline
$30$ & 0.005 & 0.0008 & 24991.4 & 2.97630 & 27090  \\ 
\hline
$29$ & 0.005 & 0.0008 & 22665.7 &{\bf 2.97719} & 24476  \\ 
\hline
$28$ & 0.005 & 0.0007 &  19735.6& 2.96693& 22036  \\ 
\hline
$27$ & 0.005 & 0.0007 & 17807.7 & 2.96839 & 19764  \\ 
\hline
$26$ & 0.005 & 0.0007 & 16017.7 & 2.97016& 17654  \\ 
\hline
$25$ & 0.005 & 0.0007 & 14374.2 &2.97261 & 15700  \\ 
\hline
$24$ & 0.005 & 0.0007 &12841.2  &2.97517 & 13896  \\ 
\hline
$23$ & 0.005 & 0.0006 & 11122.8 & 2.96960 &  12236 \\  
\hline
$22$ &0.005 &0.0006& 9885.92 &2.97399 & 10714  \\ 
\hline
$21$ &0.005 &0.0005 &8524.75  & 2.97059& 9324  \\  
\hline
$20$ &0.005 &0.0005 & 7518.04 & 2.97678& 8060   \\ 
\hline
$19$ &0.005 & 0.0004& 6454.22 &2.97655 & 6916   \\ 
\hline
$18$ &0.001 &0.0004 & 5303.48 &2.96398 & 5886  \\ 
\hline
$17$ &0.001 &0.0004 & 4613.01 &2.97415 & 4964  \\ 
\hline
$16$ &0.001 &0.0001 & 3773.67 &2.96628 & 4144  \\  
\hline
$15$ &0.001 & 0.00003& 3212.43 & 2.97692 &3420 \\ 
\hline 
\hline
\end{tabular}
\medskip
\caption{Results of the computer calculations for the upper bound of $M_{2}:=\max\{M_2(2),1\}+\max\{M_2(3),1\}^{2/3}$ in comparison with $\alpha^{3}$; see \eqref{eq:MwMw+1} for $w=2$.  The  entry in the column ``Power of $\alpha$'' is just the number $\kappa$ satisfying $M_2 \le  \alpha^\kappa$ according  to the estimate. \label{tab:w2}
}
\vskip -0.5cm
\end{table}

\begin{lem}\label{lem:estGspecial2}
Let $(G_j)_{j\ge0}$ be a linear recurrence base satisfying the conditions of Definition~\ref{def:lrb} whose characteristic polynomial is given by $X^2-aX-1$ and has dominant root $\alpha$. If $15\le a \le 39$ 
then
\[
  \int_0^1\lvert S_{n}(y,\beta)\rvert\mathrm{d}y \ll\alpha^{n\eta} \quad\hbox{ and }\qquad  \int_0^1\lvert \frac{\partial S_{n}}{\partial y}(y,\beta)\rvert\mathrm{d}y \ll \alpha^{n(1+\eta)}
\]
hold for some explicitly computable constant $\eta < 0.4886061$. 
\end{lem}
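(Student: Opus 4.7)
The plan is to invoke the blocking framework developed earlier in Section~\ref{sec:smaller-traces} with window width $w=2$ and then verify the critical inequality \eqref{eq:MwMw+1} by a rigorous computer-assisted calculation for each $a\in\{15,\ldots,39\}$. Concretely, combining \eqref{eq:intes} and \eqref{eq:newAbl} with $w=2$ reduces both desired integrals to an estimate for
\[
M_2 := \max\{M_2(2),1\}+\max\{M_2(3),1\}^{2/3}.
\]
If one can show that $M_2<\alpha^{2\cdot 1.4886061}=\alpha^{2.9772122}$, then \eqref{eq:whatwewant} follows immediately with some explicitly computable $\eta<0.4886061$, which is exactly the conclusion of the lemma.

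The second step is to bound $M_2(2)$ and $M_2(3)$ rigorously using the discretized inequalities \eqref{eq:forComputer} and \eqref{eq:fc2}. These arise from Taylor expansion of
\[
h(y,\gamma,q)=g\Big(y+\tfrac{q}{a}\Big)\,g(\alpha^{-1}y+\gamma),\qquad g(x)=\frac{\sin\pi a x}{\sin\pi x},
\]
together with the elementary a priori bounds $|g|\le a$ and $|g'|\le \pi a(a-1)$. They replace the suprema over the continuum $y\in(b/a,(b+1)/a)$ and $\gamma\in[0,1)$ by a finite maximum on a mesh of width $\varepsilon$ in the $y$-variable and $\eta$ in the $\gamma$-variable, at the cost of explicit error terms proportional to $\varepsilon$ and $\eta$. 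The necessary rational independence of $\{1,\alpha^{-1}\}$ over $\Q$ (which allows the offset in the $\gamma$-variable to be dropped and the supremum to be taken over all $\gamma\in[0,1)$) is immediate from $\alpha^2-a\alpha-1=0$.

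The third step is the actual rigorous numerical verification for each of the $25$ values $a\in\{15,\ldots,39\}$. For each $a$, I would select mesh parameters $(\varepsilon,\eta)$ adaptively—finer as $a$ decreases—and compute the right-hand sides of \eqref{eq:forComputer} and \eqref{eq:fc2}. To keep the main discrete maximum tractable I would delegate that computation to a compiled {\tt C} routine, while the auxiliary sums over the derivative bounds can be handled directly in {\tt Mathematica}. The outcome is summarised in Table~\ref{tab:w2}: the largest recorded exponent $\kappa$ with $M_2\le\alpha^\kappa$ occurs at $a=29$, where $\kappa\approx 2.97719 < 2.9772122$, giving the required bound.

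The main obstacle, and the reason the proof has to stop at $a=15$ rather than continue to smaller values, is exactly this razor-thin margin near $a=29$: the Taylor remainder $\tfrac{\varepsilon}{2}\sup|h_y|+\tfrac{\eta}{2}\sup|h_\gamma|$ must be controlled tightly enough that even after adding it to the discrete main term the resulting upper bound on $M_2$ is still strictly less than $\alpha^{2.9772122}$. This forces mesh sizes of order $10^{-3}$ or $10^{-4}$ and corresponding grid sizes of the order of $10^{6}$ evaluations per value of $a$, which is feasible but not trivial; for $a\le 14$ the margin disappears and the blocking with $w=2$ is no longer sufficient (larger $w$ or other refinements would be needed, as noted in the introduction).
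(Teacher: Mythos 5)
Your proposal is correct and follows essentially the same route as the paper: the published proof likewise reduces the lemma via \eqref{eq:intes}, \eqref{eq:newAbl} and \eqref{eq:MwMw+1} (with $w=2$) to the single inequality $M_2:=\max\{M_2(2),1\}+\max\{M_2(3),1\}^{2/3}<\alpha^{2.9772122}$ and then appeals to the computer-assisted bounds of Table~\ref{tab:w2}, whose tightest entry is indeed the boldface value $2.97719$ at $a=29$. Nothing essential is missing.
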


\begin{proof}
In view of \eqref{eq:intes}, \eqref{eq:newAbl} and \eqref{eq:MwMw+1} we have to show that 
\begin{equation}\label{eq:2223}
M_2:=\max\{M_2(2),1\}+\max\{M_2(3),1\}^{2/3} < \alpha^{1.4886061\cdot 2} =\alpha^{2.9772122}.
\end{equation}
This follows from the results listed in Table~\ref{tab:w2} (see the penultimate column whose largest value, which is typeset in boldface, is still smaller than $2.9772122$). 
\end{proof}

\medskip

We think that using blocks of length greater than two with increasing effort we can treat even smaller values of $a_1$. 

\section{Proofs of the main results}\label{sec:proof}


\subsection{Proof of Theorem~\ref{thm:bombieri-vinogradov-type}}
In order to prove Theorem~\ref{thm:bombieri-vinogradov-type} we have to show that the estimate in \eqref{rewritten-estimate} holds. To this end we employ the exponential sum estimates established in Section~\ref{sec:estimates}. Moreover, we use the following inequality due to Sobolev and Gallagher (see \cite[Lemma 1.2]{Montgomery:71}).

\begin{lem}\label{mo:lem1.2}
  Let $T_0,T\geq\delta>0$ be real numbers and $f:[T_0,T_0+T]\to \C$ a continuously differentiable function. Furthermore let
  $\mathcal{R} \subset [T_0+\frac{\delta}{2},T_0+T-\frac{\delta}{2}]$ such that
  $\left| t-t'\right| \geq \delta$ holds for $t, t' \in \mathcal{R}$ with $t\not=t'$. Then we have the inequality
  \[\sum_{t\in\mathcal{R}}\lvert f(t)\rvert
    \leq \delta^{-1} \int_{T_0}^{T_0+T}\left| f(x)\right|\mathrm{d}x
    +\frac12\int_{T_0}^{T_0+T}\left| f'(x)\right|\mathrm{d}x.\]
\end{lem}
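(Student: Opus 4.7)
My plan is to prove this by a pointwise estimate for $|f(t)|$ at each $t\in\mathcal{R}$ and then sum, exploiting that the condition $|t-t'|\ge\delta$ makes the $\delta$-neighborhoods of the points in $\mathcal{R}$ pairwise disjoint and contained in $[T_0,T_0+T]$.

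The starting point is the fundamental theorem of calculus: for $t\in\mathcal{R}$ and $y\in[t-\delta/2,t+\delta/2]$ we have $f(t)=f(y)+\int_y^t f'(x)\mathrm{d}x$, hence $|f(t)|\le |f(y)|+\int_{\min(y,t)}^{\max(y,t)}|f'(x)|\mathrm{d}x$. First I would handle $y\in[t-\delta/2,t]$ separately from $y\in[t,t+\delta/2]$ so that the derivative integral runs only over a half-interval of length $\delta/2$. Integrating the first inequality over $y\in[t-\delta/2,t]$ and the second over $y\in[t,t+\delta/2]$ gives, after adding the two and dividing by $\delta$,
\[
|f(t)|\le \frac1\delta\int_{t-\delta/2}^{t+\delta/2}|f(y)|\mathrm{d}y+\frac12\int_{t-\delta/2}^{t+\delta/2}|f'(x)|\mathrm{d}x.
\]
This is the local Sobolev-type estimate; splitting the $y$-integration into the two halves is precisely what produces the sharp constant $\tfrac12$ in front of the derivative term (a lazy single integration would only yield $1$).

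Next I would sum this pointwise bound over $t\in\mathcal{R}$. The separation assumption $|t-t'|\ge\delta$ implies that the intervals $[t-\delta/2,t+\delta/2]$ for distinct $t\in\mathcal{R}$ have pairwise disjoint interiors, and the hypothesis $\mathcal{R}\subset[T_0+\delta/2,T_0+T-\delta/2]$ ensures each such interval lies inside $[T_0,T_0+T]$. Consequently
\[
\sum_{t\in\mathcal{R}}\int_{t-\delta/2}^{t+\delta/2}|f(y)|\mathrm{d}y\le \int_{T_0}^{T_0+T}|f(y)|\mathrm{d}y,
\]
and similarly for $|f'|$, which yields the claimed inequality.

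The proof itself presents no real obstacle; the only subtle point is getting the constant $\tfrac12$ rather than $1$ on the derivative term, which requires the left/right splitting of the auxiliary integration range described above. Everything else — continuous differentiability, disjointness of the neighborhoods, and containment in $[T_0,T_0+T]$ — is a direct consequence of the hypotheses.
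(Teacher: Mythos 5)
Your proof is correct, and it is the standard argument for this Sobolev--Gallagher inequality: the local bound $|f(t)|\le \delta^{-1}\int_{t-\delta/2}^{t+\delta/2}|f|+\tfrac12\int_{t-\delta/2}^{t+\delta/2}|f'|$ (with the left/right splitting correctly producing the constant $\tfrac12$), followed by summation over the $\delta$-separated points. The paper itself gives no proof but simply cites Montgomery's Lemma~1.2, whose argument is essentially the one you give (Montgomery obtains the $\tfrac12$ by a Fubini-type interchange rather than by splitting the averaging interval, but the two devices are equivalent), so there is nothing to correct.
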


\begin{proof}[Proof of Theorem~\ref{thm:bombieri-vinogradov-type}]
We need to prove the estimate in \eqref{rewritten-estimate}. First we rewrite the sum on the right hand side of \eqref{rewritten-estimate} to get
  \begin{align}\label{eq:sumqdelta}
     \sum_{Q<q\leq 2Q}\sum_{h=1}^{q-1}\left| S_n\left(\frac hq,\frac{r}{s}\right)\right|
    =\sum_{\delta=1}^{2Q}\sum_{Q\delta^{-1}<q\leq 2Q\delta^{-1}}
      \sum_{\substack{h=1\\ (h,q)=1}}^{q-1}
      \left| S_{n}\left(\frac hq,\frac{r}{s}\right)\right|.
  \end{align}
Now we concentrate on the two innermost sums and set 
  \[
   L_Q(\delta):= \sum_{Q\delta^{-1}<q\leq 2Q\delta^{-1}}
      \sum_{\substack{h=1\\ (h,q)=1}}^{q-1}
      \left| S_{n}\left(\frac hq,\frac{r}{s}\right)\right|.
  \]
Since the estimate in \eqref{rewritten-estimate} is trivially true for $n\le D$ we will assume that $n > D$ in the sequel.
Using the product representation for $S_n$ in (\ref{eq:IprodbisB}) we obtain for each $n_1\in\{D,\ldots, n-1\}$ the estimate (we use the abbreviation $\mathbf{j}=(j_1,\ldots,j_k)$)
  \begin{equation*}
   L_Q(\delta)
    \leq\sum_{Q\delta^{-1}<q\leq 2Q\delta^{-1}}
    \sum_{\substack{h=1\\ (h,q)=1}}^{q-1}
    \sum_{k=1}^{n-n_1}\sum_{\mathbf{j}\in
    K_k(n_1)}\prod_{\ell=1}^k\lvert
    B_{n-\sum_{r=1}^{\ell-1}j_r,j_\ell}\left(\frac{h}{q},\frac{r}{s}\right)\rvert\cdot
    \lvert S_{n-\sum_{r=1}^k j_r}\left(\frac{h}{q},\frac{r}{s}\right)\rvert.
  \end{equation*}
Later we will choose $n_1$ depending on $Q$ and $\delta$.
By the definition of $K_k(n_1)$ in \eqref{eq:iterate2B} the index $n-\sum_{r=1}^k j_r$ always satisfies $n_1- D < n-\sum_{r=1}^k j_r \le n_1$. Thus 
\begin{align*}
   L_Q(\delta)
    &\leq\sum_{Q\delta^{-1}<q\leq 2Q\delta^{-1}}
    \sum_{\substack{h=1\\ (h,q)=1}}^{q-1}
     \max_{n_1-D<i\le n_1}\lvert S_{i}\left(\frac{h}{q},\frac{r}{s}\right)\rvert
    \sum_{k=1}^{n-n_1}\sum_{\mathbf{j}\in
    K_k(n_1)}\prod_{\ell=1}^k\lvert
    B_{n-\sum_{r=1}^{\ell-1}j_r,j_\ell}\left(\frac{h}{q},\frac{r}{s}\right)\rvert
    \\
   & \ll \sum_{n_1-D<i\le n_1} 
   \sum_{Q\delta^{-1}<q\leq 2Q\delta^{-1}}
    \sum_{\substack{h=1\\ (h,q)=1}}^{q-1}
     \lvert S_{i}\left(\frac{h}{q},\frac{r}{s}\right)\rvert
    \sum_{k=1}^{n-n_1}\sum_{\mathbf{j}\in
    K_k(n_1)}\prod_{\ell=1}^k\lvert
    B_{n-\sum_{r=1}^{\ell-1}j_r,j_\ell}\left(\frac{h}{q},\frac{r}{s}\right)\rvert .
  \end{align*}
Now we apply Proposition~\ref{prop:S-maxA} which yields
\begin{equation*}
   L_Q(\delta)
     \ll   \sum_{n_1-D<i\le n_1} \alpha^{\lambda(n-n_1)}\sum_{Q\delta^{-1}<q\leq 2Q\delta^{-1}}
    \sum_{\substack{h=1\\ (h,q)=1}}^{q-1}
   \lvert S_{i}\left(\frac{h}{q},\frac{r}{s}\right)\rvert
  \end{equation*}
for some constant $\lambda<1$. In this estimate $\lambda$ and the implied constant depend only on $G$ and $s$. In the next step we apply Lemma \ref{mo:lem1.2} together with the 1-norm estimates in Propositions~\ref{prop:S-1norm} and~\ref{prop:S'-1norm2}. Setting $\eta= \log_\alpha \min\{\alpha^{\frac12},(m+3)\} \le \frac12$ we get
\begin{align}
L_Q(\delta)
&
\ll \sum_{n_1-d<i\le n_1}  \alpha^{\lambda(n-n_1)} 
\bigg(Q^2\delta^{-2}
\lVert
      S_{i}\left(\cdot,\frac{r}{s}\right)
       \rVert_1+
    \lVert  
     \frac{\partial S_{i}}{\partial y}\left(\cdot,\frac{r}{s}\right)
     \rVert_1 \bigg) \nonumber
  \\   
&\ll \alpha^{\lambda(n-n_1)}  (Q^2\delta^{-2}\alpha^{\eta n_1} +  \alpha^{(1+\eta)n_1}). \label{eq:twosummands}
 \end{align} 
We choose $n_1$ by setting
\[
n_1:=\min\left(\lfloor2\log_\alpha(Q\delta^{-1})\rfloor + D,n-1\right).
\]
we gain (note that for $n_1=\lfloor2\log_\alpha(Q\delta^{-1})\rfloor+D$ both summands in \eqref{eq:twosummands} are roughly of the same size)
\[
 L_Q(\delta) \ll Q^2\delta^{-2}\alpha^{\eta n} + \alpha^{\lambda n}\alpha^{2(1+\eta-\lambda)\log_\alpha(Q\delta^{-1})}
 = Q^2\delta^{-2}\alpha^{\eta n}+\alpha^{\lambda n}(Q\delta^{-1})^{2(1+\eta-\lambda)}.
\]
It suffices to prove the theorem for small $\varepsilon$. 
Thus if $\eta  < \frac12$ we may assume that $2\eta + \varepsilon <1$.
On top of this, for all $\eta \le \frac12$ we may assume that 
$\varepsilon$ is small enough and that 
the constant  $\lambda<1$ from Proposition~\ref{prop:S-maxA} is close enough to $1$ such that 
$\varepsilon(\frac12 -\varepsilon) < 1-\lambda < \frac{\varepsilon}2$ 
holds (note that if we increase $\lambda$, the estimate in Proposition~\ref{prop:S-maxA} clearly remains valid). This yields
\[
 L_Q(\delta) \ll Q^2\delta^{-2}\alpha^{\eta n} + \alpha^{\lambda n} (Q\delta^{-1})^{2\eta+\varepsilon}
\]
Taking into account the sum over $\delta$ in \eqref{eq:sumqdelta} we end up with
 \begin{equation}\label{eq:finalinsert}
 \sum_{Q<q\leq 2Q}\sum_{h=1}^{q-1}\left| S_n\left(\frac hq,\frac{r}{s}\right)\right|
 \ll
 \begin{cases}
 Q^2\alpha^{\eta n} +  \alpha^{\lambda n} Q^{1+\varepsilon} & \hbox{if } \eta = \frac 12,\\
 Q^2\alpha^{\eta n} +  \alpha^{\lambda n} Q& \hbox{if } \eta < \frac 12.
 \end{cases}
 \end{equation}
Let $\vartheta = 1-\eta$. Then $\vartheta\ge \frac 12$ and by Lemma~\ref{lem:mest} we have  $\vartheta\to1$ for $a_1\to \infty$. 
Also recall that  $Q \le x^{\vartheta-\varepsilon}$ and $n\le \log_\alpha x+C$ for some constant $C$ depending on $G$. Thus for $\eta=\frac12$ we get 
\begin{equation}\label{etacase12}
Q^2\alpha^{\eta n} +  \alpha^{\lambda n} Q^{1+\varepsilon} \ll Qx^{\eta +\vartheta - \varepsilon} + Qx^{\lambda + \varepsilon(\vartheta-\varepsilon)}
= Qx^{1 - \varepsilon} + Qx^{\lambda + \varepsilon(\frac{1}{2}-\varepsilon)} \ll Qx^{\gamma}
\end{equation}
for some $\gamma<1$. For $\eta< \frac12$ we gain
\begin{equation}\label{etacaseless12}
Q^2\alpha^{\eta n} +  \alpha^{\lambda n} Q \ll Qx^{\eta + \vartheta - \varepsilon} + Qx^{\lambda}= Qx^{1 - \varepsilon} + Qx^{\lambda} \ll Qx^{\lambda}.
\end{equation}
Inserting \eqref{etacase12} and \eqref{etacaseless12} in \eqref{eq:finalinsert} we finally see that
\[
 \sum_{Q<q\leq 2Q}\sum_{h=1}^{q-1}\left| S_n\left(\frac hq,\frac{r}{s}\right)\right|
 \ll
Qx(\log 2x)^{-A}
\]
holds for each $A >0$ and the proof is finished.
\end{proof} 

\subsection{Improvements on the level of distribution  and proof of Corollary~\ref{cor1}}
Let $G$ be a linear recurrence base as in Definition~\ref{def:lrb} and let $\alpha$ be the dominant root of the characteristic polynomial $X^d-a_1X^{d-1}-\cdots- a_{d-1}X - a_d$ of $G$. In the proof of Theorem~\ref{thm:bombieri-vinogradov-type} we see that the level of distribution $\vartheta(G)$ is equal to $1-\eta$ where $\eta$ satisfies $\Vert S_n(\cdot,\beta)\Vert_1 \ll \alpha^{\eta n}$ and $\Vert \frac{\partial S_n}{\partial y}(\cdot,\beta)\Vert_1 \ll \alpha^{(1+\eta)n}$. Together with our estimates of these 1-norms, we gain the following result.

\begin{lem}\label{lem:ThetaImproved}
  Let $G=(G_j)_{j\ge0}$ be a linear recurrence base whose
  characteristic polynomial is given by $X^d-a_1X^{d-1}-\cdots- a_{d-1}X-a_d$.  If $a_1 \ge 59$
  then in Theorem~\ref{thm:bombieri-vinogradov-type} the level of distribution satisfies 
  \[
  \vartheta(G) \ge  0.5113939 = 1-0.4886061. 
  \]
  If the characteristic polynomial of $G$ is of the special form $X^2-a_1X-1$ then this estimate even holds for $a_1 \ge 15$.
 \end{lem}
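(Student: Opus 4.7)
The plan is to extract the relationship between the exponent $\eta$ appearing in our $1$-norm bounds and the level of distribution $\vartheta(G)$ that is implicit in the proof of Theorem~\ref{thm:bombieri-vinogradov-type}. Tracing through that proof, after applying the Sobolev-Gallagher inequality via Lemma~\ref{mo:lem1.2} and inserting the $1$-norm bounds from Propositions~\ref{prop:S-1norm} and~\ref{prop:S'-1norm2}, the parameter $\eta$ governs the balance achieved by the choice $n_1 \sim 2\log_\alpha(Q\delta^{-1})$, and the passage from \eqref{etacase12}--\eqref{etacaseless12} shows that \eqref{rewritten-estimate} holds for every $Q \le x^{\vartheta-\varepsilon}$ with $\vartheta := 1-\eta$. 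Hence to prove the lemma it suffices to exhibit, in each claimed range, some explicitly computable $\eta < 0.4886061$ satisfying the two $1$-norm bounds $\int_0^1 |S_n(y,\beta)|\,\mathrm{d}y \ll \alpha^{n\eta}$ and $\int_0^1 |\tfrac{\partial S_n}{\partial y}(y,\beta)|\,\mathrm{d}y \ll \alpha^{n(1+\eta)}$.

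For the general case $a_1 \ge 59$, I would simply invoke Lemma~\ref{lem:estallgG}, which provides exactly such an $\eta$. Inserted into the proof of Theorem~\ref{thm:bombieri-vinogradov-type}, this immediately gives $\vartheta(G) \ge 1 - \eta > 0.5113939$, which is the first claim.

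For the special family with characteristic polynomial $X^2 - a_1 X - 1$, the valid range of $a_1$ is larger and must be handled in two pieces. When $a_1 \ge 40$ I would invoke Lemma~\ref{lem:estGspecial}, which exploits the sharpening of Remark~\ref{rem:better} with $r=2$ to keep $\eta < 0.4886061$. For the remaining range $15 \le a_1 \le 39$ I would use Lemma~\ref{lem:estGspecial2}, whose computer-assisted verification based on the blocking procedure of Section~\ref{sec:smaller-traces} with window width $w=2$ secures the same bound. In all three regimes the conclusion $\vartheta(G) \ge 1 - \eta > 0.5113939$ follows by the same insertion.

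The main obstacle is not substantive but organisational: one must verify that no step in the proof of Theorem~\ref{thm:bombieri-vinogradov-type} tacitly forces $\eta = \tfrac12$ in a way that would destroy the link between $\eta$ and $\vartheta$. Inspecting the distinction between \eqref{etacase12} and \eqref{etacaseless12}, the $\eta < \tfrac12$ branch is valid provided $\varepsilon$ is chosen small enough and $\lambda$ is taken sufficiently close to $1$ in Proposition~\ref{prop:S-maxA}; both conditions are available, and the $\eta$-values produced by Lemmas~\ref{lem:estallgG}, \ref{lem:estGspecial}, and~\ref{lem:estGspecial2} lie comfortably below $0.4886061 < \tfrac12$. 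Therefore the argument is a direct assembly of the exponential-sum work already carried out in Sections~\ref{sec:estimates}--\ref{sec:smaller-traces}.
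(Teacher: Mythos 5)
Your proposal is correct and follows essentially the same route as the paper: the paper's proof likewise reads off $\vartheta(G)=1-\eta$ from the proof of Theorem~\ref{thm:bombieri-vinogradov-type} and then invokes Lemma~\ref{lem:estallgG} for $a_1\ge 59$, Lemma~\ref{lem:estGspecial} for the special quadratic family with $a_1\ge 40$, and Lemma~\ref{lem:estGspecial2} for $15\le a_1\le 39$. Your additional check that the $\eta<\tfrac12$ branch of \eqref{etacaseless12} is the one that applies is a sensible (and correct) remark that the paper leaves implicit.
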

 
 \begin{proof}
 From Lemma~\ref{lem:estallgG} we see that $\Vert S(\cdot,\beta)\Vert \ll \alpha^{0.4886061}$ for $a_1\ge 59$. This proves the first assertion. 
 
 If the characteristic polynomial of $G$ is of the special form $X^2-a_1X-1$ then for $a_1 \ge 40$ the result follows because Lemma~\ref{lem:estGspecial} yields again $\Vert S(\cdot,\beta)\Vert \ll \alpha^{0.4886061}$. If $15 \le a_1 \le 39$ then the result is a consequence of Lemma~\ref{lem:estGspecial2}. 
 \end{proof}

Along the lines indicated in Section~\ref{sec:13} we can now prove Corollary~\ref{cor1}.

\begin{proof}[Proof of Corollary~\ref{cor1}]
From Greaves~\cite[Proposition~1 (see also Theorem~1) of Chapter~5]{Greaves:01}) it follows that \eqref{eq:cor1} holds provided that $\frac {1}{\vartheta(G)} < 2 - \delta_2$ for a certain constant $\delta_2$. Since $\delta_2=0.044560$ is an admissible choice for this constant according to Greaves~\cite{Greaves:86}, we conclude that  \eqref{eq:cor1} holds if $\vartheta(G) > 0.5113938\ldots$ Since this is true in view of Lemma~\ref{lem:ThetaImproved} whenever the conditions of the corollary are in force, the result is established.
\end{proof}

%

\section*{Acknowledgement}

Major parts of the present paper were established when the first
author was visiting the Chair of Mathematics and Statistics at the
University of Leoben, Austria. He thanks the institution for
its hospitality.

\bibliographystyle{abbrv}
\bibliography{bv-linear-recurrent}

\end{document}